\newtheorem{theo}{Theorem}
\newtheorem{lemma}[theo]{Lemma}
\newtheorem{cor}[theo]{Corollary}
\newtheorem{proposition}[theo]{Proposition} 
\newtheorem{conjecture}[theo]{Conjecture}
\newtheorem{remark}[theo]{Remark}
\theoremstyle{definition}
\newtheorem{definition}[theo]{Definition}
\newtheorem{example}[theo]{Example}
\newcommand{\C}{\mathrm C}
\newcommand{\D}{\mathrm D}
\newcommand{\OO}{\mathrm O}
\newcommand{\V}{\mathrm V}
\newcommand{\Z}{\mathrm Z}
\newcommand{\FF}{\mathbb F}
\newcommand{\PGammaL}{\mathop{\mathrm{P\Gamma L}}}
\newcommand{\GL}{\mathop{\mathrm{GL}}}
\newcommand{\SL}{\mathop{\mathrm{SL}}}
\newcommand{\PGL}{\mathop{\mathrm{PGL}}}
\newcommand{\PSL}{\mathop{\mathrm{PSL}}}
\newcommand{\PSU}{\mathop{\mathrm{PSU}}}
\newcommand{\Ree}{\mathop{\mathrm{Ree}}}
\newcommand{\Suz}{\mathop{\mathrm{Suz}}}
\newcommand{\Sym}{\mathop{\mathrm{Sym}}}
\newcommand{\Cos}{\mathop{\mathrm{Cos}}}
\newcommand{\Aut}{\mathop{\mathrm{Aut}}}
\renewcommand{\P}{\mathcal P}
\renewcommand{\wr}{\mathop{\mathrm{wr}}}
\begin{document}
\title{On graph-restrictive permutation groups}

\author[P. Poto\v{c}nik]{Primo\v{z} Poto\v{c}nik}
\address{Primo\v{z} Poto\v{c}nik, Institute of Mathematics, Physics, and
  Mechanics, \newline 
Jadranska 19, 1000 Ljubljana, Slovenia}\email{primoz.potocnik@fmf.uni-lj.si}

\author[P. Spiga]{Pablo Spiga}
\address{Pablo Spiga,  School of Mathematics and Statistics,\newline
The University of Western Australia,
 Crawley, WA 6009, Australia} \email{spiga@maths.uwa.edu.au}

\author[G. Verret]{Gabriel Verret}
\address{Gabriel Verret, Institute of Mathematics, Physics, and
  Mechanics, \newline 
Jadranska 19, 1000 Ljubljana, Slovenia}
\email{gabriel.verret@fmf.uni-lj.si}

\thanks{The second author is supported by UWA as part of the
Australian Council Federation Fellowship Project FF0776186. The second author would also like to thank the Institute of
Mathematics, Physics and Mechanics at the University of Ljubljana for the
warm hospitality.}

\subjclass[2000]{Primary 20B25; Secondary 05E18}

\keywords{arc-transitive graphs, semiprimitive group, primitive group, graph-restrictive group, Weiss Conjecture} 

\begin{abstract}
Let $\Gamma$ be a connected $G$-vertex-transitive graph, let $v$ be a vertex of $\Gamma$ and let $L=G_v^{\Gamma(v)}$
be the permutation group induced by the action of the vertex-stabiliser $G_v$ on the neighbourhood $\Gamma(v)$.
Then $(\Gamma,G)$ is said to be \emph{locally-$L$}. A transitive permutation group $L$ is \emph{graph-restrictive} if there exists a constant
$c(L)$ such that, for every locally-$L$ pair $(\Gamma,G)$ and an arc $(u,v)$ of $\Gamma$, the
inequality $|G_{uv}|\leq c(L)$ holds.

Using this terminology, the Weiss Conjecture says that primitive groups are graph-restrictive. We propose a very strong generalisation of this conjecture: a group is graph-restrictive if and only if it is semiprimitive. (A transitive permutation group is said to be \emph{semiprimitive} if each of its normal subgroups is either transitive or semiregular.) Our main result is a proof of one of the two implications of this conjecture, namely that graph-restrictive groups are semiprimitive. We also collect the known results and prove some new ones regarding the other implication.
\end{abstract}

\maketitle

\section{Introduction}
Unless explicitly stated, all graphs considered in this paper are finite and simple. A graph $\Gamma$ is said to be $G$-\emph{vertex-transitive} if $G$ is a subgroup of $\Aut(\Gamma)$ acting transitively on the vertex-set $\V\Gamma$ of $\Gamma$. Similarly, $\Gamma$ is said to be $G$-\emph{arc-transitive} if $G$ acts transitively on the arcs of $\Gamma$ (that is, on the ordered pairs of adjacent vertices of $\Gamma$). When $G=\Aut(\Gamma)$, the prefix $G$ in the above notation is sometimes omitted.

One of the most important early results concerning arc-transitive graphs is the beautiful theorem of Tutte~\cite{Tutte, Tutte2}, saying that, in a connected $3$-valent arc-transitive graph, the order of an arc-stabiliser divides 16. The immediate generalisation of Tutte's result to 4-valent graphs is false; there exist connected $4$-valent arc-transitive graphs with arbitrarily large arc-stabilisers (for example, see~\cite{PSV}).

On the other hand, it can easily be deduced from the work of Gardiner~\cite{Gard} that, if $\Gamma$ is a connected 4-valent $G$-arc-transitive graph and $(u,v)$ is an arc of $\Gamma$, then $|G_{uv}|\leq 2^23^6$ unless $G_v^{\Gamma(v)}\cong \D_4$, where $G_v^{\Gamma(v)}$ denotes the permutation group induced by the action of $G_v$ on the neighbourhood $\Gamma(v)$. In view of Gardiner's results, bounds on the order of the arc-stabiliser are now usually considered in terms of the \emph{local action} $G_v^{\Gamma(v)}$ rather than simply the valency. This leads us to the following definitions.

\begin{definition} \label{def:locally}
Let $\Gamma$ be a connected $G$-vertex-transitive graph, let $v$ be a vertex of $\Gamma$ and let $L$ be a permutation group which is permutation isomorphic to $G_v^{\Gamma(v)}$. Then $(\Gamma,G)$ is said to be \emph{locally-$L$}. More generally, if $\P$ is a  permutation group property, then $(\Gamma,G)$ will be called {\em locally-$\P$} provided that $G_v^{\Gamma(v)}$ possesses the property $\P$.
\end{definition}

Note that, if $\Gamma$ has valency $d$, then the permutation group $G_v^{\Gamma(v)}$ has degree $d$ and, up to permutation isomorphism, does not depend on the choice of $v$. In~\cite{Verret}, the third author introduced the following notion.

\begin{definition}\label{def:Restrictive} 
A transitive permutation group $L$ is \emph{graph-restrictive} if there exists a constant
$c(L)$ such that, for every locally-$L$ pair $(\Gamma,G)$ and an arc $(u,v)$ of $\Gamma$, the
inequality $|G_{uv}|\leq c(L)$ holds.
\end{definition} 

The above definition makes the statement of many results and questions very succinct. For example, Tutte's theorem can be restated as follows: the symmetric group $\Sym(3)$ in its natural action on $3$ points is graph-restrictive and the constant $c(\Sym(3))$ can be chosen to be $16$. Several authors generalised Tutte's result from $\Sym(3)$ to other primitive groups, which eventually led Weiss to pose the following conjecture.\medskip

\noindent\textbf{Weiss Conjecture}~\cite[Conjecture 3.12]{Weiss}.
\emph{Primitive groups are graph-restrictive.}\\

While much effort has been deployed in the attempts to prove the Weiss Conjecture, very few authors considered graph-restrictiveness of imprimitive groups. We will show that the main results which have been used to attack the Weiss Conjecture can be generalised from primitive groups to semiprimitive groups, which we conjecture are graph-restrictive. (A transitive permutation group is said to be \emph{semiprimitive} if each of its normal subgroups is either transitive or semiregular. Contrary to~\cite{BerMar}, we consider regular groups to be semiprimitive).  In fact, encouraged by some results proved in this paper, we conjecture the following characterisation of graph-restrictive groups.

\begin{conjecture}\label{Conj:SP}
A permutation group is graph-restrictive if and only if it is semiprimitive.
\end{conjecture}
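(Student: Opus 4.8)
The plan is to prove the two implications separately, as they are of entirely different character. For the implication that every graph-restrictive group is semiprimitive I would argue the contrapositive: given a transitive group $L$ on a set $\Omega$ of size $d$ that is \emph{not} semiprimitive, I would exhibit, for every $n$, a connected locally-$L$ pair $(\Gamma,G)$ and an arc $(u,v)$ with $|G_{uv}|\geq f(n)$ for some unbounded $f$. By assumption $L$ has a normal subgroup $N$ that is neither transitive nor semiregular. The first step is to extract the two structural features this guarantees. Since $L$ is transitive and $N\trianglelefteq L$, the group $L$ permutes the $N$-orbits transitively, so these orbits all have a common size $m$; intransitivity gives $m<d$, while non-semiregularity, together with the conjugacy of the point-stabilisers $N_\alpha$ under $L$, forces $N_\alpha\neq 1$ for \emph{every} $\alpha$ and hence $m\geq 2$. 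These are exactly the two features — a proper block-like partition and a nontrivial stabiliser fixing a neighbour — that I would feed into the construction.

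For the construction I would work with group amalgams acting on the $d$-regular tree $T_d$, in the spirit of Bass--Serre theory. The aim is to build a vertex-group $V$ together with an arc-group $A\leq V$ of index $d$ whose action on the coset space $V/A$ induces $L$, but in which the kernel $K=\Ker(V\to L)$ of the local action is inflated to be arbitrarily large; one then completes $(V,A)$ to a faithful amalgam on $T_d$ and passes to finite quotient graphs. The non-semiregular element of $N$ supplies an automorphism fixing a neighbour, which can be \emph{propagated} across a ball of radius $n$ in $T_d$ to produce a nested chain of arc-stabiliser elements, so that $|G_{uv}|\to\infty$ with $n$; the intransitivity of $N$ is precisely what prevents this inflated kernel from being normal in the whole amalgam, keeping the quotient graphs connected and the action faithful. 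I would model the details on the exceptional local action $\D_4$ appearing in Gardiner's theorem, which is exactly the degree-$4$ instance of this mechanism: $\D_4$ on $4$ points is not semiprimitive, and the known $4$-valent arc-transitive graphs with unbounded arc-stabilisers form precisely such a family.

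The reverse implication, that semiprimitive groups are graph-restrictive, is where the genuine difficulty lies, and I do not expect it to be attainable in full with current methods. The strategy would be to peel off semiregular normal subgroups of $L$ one at a time, exploiting the interplay between such normal subgroups of the local action and quotients of the pair, so as to reduce the bound on $|G_{uv}|$ to the case of a primitive (or at least quasiprimitive) local action, and then to deploy the standard local-finiteness machinery: the Thompson--Wielandt theorem, Gardiner-style analysis of the $p$-local structure of the vertex-stabiliser, and Weiss's bounds for individual O'Nan--Scott types. The decisive obstacle is that this chain of reductions terminates exactly in the assertion that primitive groups are graph-restrictive, which is the Weiss Conjecture and is open; for this reason only partial results, valid for particular families of semiprimitive groups, are within reach, and it is these that I would assemble rather than a complete proof of this direction.
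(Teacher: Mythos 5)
Your reading of the scope is right: the statement is a conjecture, and the paper proves only the implication ``graph-restrictive $\Rightarrow$ semiprimitive'' (Theorem~\ref{Theo:ResAreSP}), treating the converse as open and, like you, merely assembling partial results for it; so your handling of the reverse direction is consistent with the paper. For the direction you do attempt, your overall architecture also matches the paper's: a vertex group with an artificially inflated kernel, an edge group meeting it in an index-two arc-stabiliser, completion to an amalgam, and passage (via residual finiteness of the amalgamated free product and Sabidussi's coset-graph construction) to finite connected locally-$L$ graphs.

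The gap is that the entire technical content of that direction sits in two steps you leave as metaphors. First, ``inflating the kernel'' is not achieved by propagating a single non-semiregular element across a ball of radius $n$: one must exhibit an explicit finite group $A$ with a core-free subgroup $C$ of index $d$ such that the action of $A$ on the cosets of $C$ is permutation isomorphic to $L$ on $\Lambda$ while the kernel of that action is arbitrarily large. The paper does this by realising $A$ inside $L_\lambda\wr_\Delta L$ (with $\Delta$ the set of $K$-orbits) cut out by a cocycle-type condition, so that the kernel is $K_\lambda^{|\Delta|m}$ for arbitrary $m$; non-semiregularity gives $K_\lambda\neq 1$ and intransitivity gives $|\Delta|>1$. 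Second, and more seriously, faithfulness is not a consequence of intransitivity of the normal subgroup ``preventing the inflated kernel from being normal in the amalgam'': one must construct the edge group $B=C\rtimes\langle\iota\rangle$ so that no nontrivial subgroup of $C$ is normal in both $A$ and $B$ (Lemma~\ref{silly6}); otherwise the finite quotient collapses exactly the kernel you built, or the induced local action is strictly larger than $L$. The involution $\iota$, which permutes the $m$ coordinates differently on $S_\delta$ and off $S_\delta$, is the crux of the paper's argument and has no counterpart in your sketch; appealing to the known $\D_4$ examples does not supply it, since those exploit special features of $\D_4$ rather than a mechanism that works for an arbitrary non-semiprimitive $L$.
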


Our goal in this paper is to collect the known results regarding this conjecture and to establish new ones. One of our main results is the following theorem (proved in Section~\ref{Sec:ResAreSP}), which proves one of the two implications of the above conjecture.

\begin{theo}\label{Theo:ResAreSP}
Every  graph-restrictive group is semiprimitive.
\end{theo}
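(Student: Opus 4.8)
The plan is to prove the contrapositive: if a transitive group $L\le\Sym(\Omega)$ of degree $d=|\Omega|$ is not semiprimitive, then $L$ is not graph-restrictive. I would begin by fixing a normal subgroup $N\trianglelefteq L$ that is neither transitive nor semiregular. Since $N$ is normal in the transitive group $L$, its orbits form a block system $\mathcal B$ whose blocks all share a common size $m$; intransitivity of $N$ forces the number of blocks $s:=d/m$ to be at least $2$, while the failure of semiregularity yields a point $\alpha$ and a nonidentity element $x\in N_\alpha$ (so in particular $m\ge2$). These are exactly the two features the construction will exploit: $x$ furnishes a ``seed'' automorphism fixing an entire neighbourhood, and the nontrivial block system $\mathcal B$ provides the combinatorial scaffolding along which copies of this seed are to be compounded. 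Since $L$ is transitive, every locally-$L$ pair is automatically arc-transitive, so I may work throughout with the standard correspondence between connected arc-transitive graphs and amalgams $(G_v,G_{uv},G_e)$, where $G_e=\langle G_{uv},g\rangle$ and $g$ interchanges the two ends of an edge with $g^2\in G_{uv}$.

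Next, for each integer $t$ I would construct a finite connected locally-$L$ pair $(\Gamma_t,G_t)$ whose arc-stabiliser grows without bound in $t$, generalising the classical family of $4$-valent locally-$\D_4$ graphs of unbounded arc-stabiliser alluded to in the introduction (the Praeger--Xu graphs; here $L=\D_4$ on $4$ points is the prototypical non-semiprimitive example). The idea is to realise the quotient action $L^{\mathcal B}$ on the $s$ blocks on a suitable ``base'', and then to attach over each vertex a fibre carrying the $N$-action, arranged in $t$ layers indexed by the block structure. Concretely, I would assemble the vertex group $G_v$ so that $G_v^{\Gamma(v)}=L$ while its kernel $G_v^{[1]}$ contains an iterated product of conjugates of $\langle x\rangle$, one per layer, giving $|G_v^{[1]}|\ge|N_\alpha|^{\,t}$; the interchanging element $g$ is then chosen compatibly with the fibre structure so that the amalgam is graph-feasible and its arc-stabiliser inherits this growth. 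Finiteness and connectedness of $\Gamma_t$ are secured either by exhibiting an explicit finite model --- a cyclic ``closing up'' of the layered structure, exactly as for the Praeger--Xu graphs --- or, more abstractly, by invoking residual finiteness of amalgamated free products of finite groups (such completions act on the $d$-regular tree and are virtually free) to pass to a finite quotient into which $G_v$, and hence the full local structure and the arc-stabiliser, embed; taking this quotient of large girth guarantees that $\Gamma_t$ is simple and genuinely locally-$L$.

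Since then $|(G_t)_{uv}|\ge(|L|/d)\,|N_\alpha|^{\,t}\to\infty$ as $t\to\infty$, no constant $c(L)$ can bound the arc-stabilisers over all locally-$L$ pairs, so $L$ is not graph-restrictive, as required. I expect the main obstacle to be the core construction of the second paragraph: one must verify at once that the chosen $G_v$ really has local action exactly $L$, that the amalgam is consistent and does not collapse, and that the kernel genuinely grows with $t$ rather than stabilising. This is precisely the point at which both hypotheses are indispensable and must be used in tandem --- non-semiregularity supplies a nonidentity kernel element at a single layer, but only intransitivity, through the existence of a second block into which that element can ``point'', allows these kernel elements to be replicated independently layer by layer without collapsing. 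Carefully tracking which elements fix which neighbourhoods, and checking that $g$ respects the layered fibre structure, will be the delicate bookkeeping at the heart of the proof.
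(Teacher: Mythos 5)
Your overall architecture coincides with the paper's: argue the contrapositive, build for each parameter an amalgam $(G_v,G_e)$ with local action $L$ and a kernel $G_v^{[1]}$ that grows without bound, and then use residual finiteness of the amalgamated free product (Baumslag) together with Sabidussi's coset-graph construction to produce genuine finite connected locally-$L$ pairs. That much is right, and your closing estimate $|G_{uv}|\geq (|L|/d)\,|N_\alpha|^t$ is the correct way to finish. However, the step you defer as ``delicate bookkeeping'' is not bookkeeping; it is the entire mathematical content of the theorem, and as stated your sketch does not contain the idea needed to make it work. There are two intertwined problems. First, you never specify the vertex group. Saying that $G_v^{[1]}$ ``contains an iterated product of conjugates of $\langle x\rangle$, one per layer'' does not determine a group on which $L$ acts with the right local action; the paper has to build a rather subtle object: not the plain wreath product $L_\lambda\wr_\Delta L$, but the subgroup $A$ of $\Omega^m\rtimes L$ consisting of tuples $(g,f_1,\ldots,f_m)$ whose coordinate functions project, under $\pi$, to prescribed elements of $S_\delta$ determined by $g$ (a twisted, wreath-extension-type construction), precisely so that $A$ maps \emph{onto} $L$ with kernel $M\cong K_\lambda^{|\Delta|m}$ and so that the action on cosets of $C=\varphi^{-1}(L_\lambda)$ is permutation isomorphic to $L$ on $\Lambda$.

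Second, and more seriously, your plan nowhere addresses the non-collapse condition: for the amalgam to yield a graph whose vertex-stabiliser really is $G_v$ (rather than a bounded quotient of it), no non-trivial subgroup of the arc-stabiliser $C$ may be normal in both $G_v=A$ and the edge group $B$. If such a subgroup exists, it lies in the kernel of the action on the tree and on every finite completion, and the faithful vertex-stabiliser could well remain bounded in $t$ --- so the whole argument would prove nothing. For the groups one naturally writes down, the kernel $M$ \emph{is} normal in $A$, so everything hinges on choosing the arc-reversing involution to destroy every $A$-normal subgroup inside $C$. This is what the paper's automorphism $\iota$ accomplishes, via a parity-dependent shifting of the coordinates $f_i\mapsto f_{i\pm1}$ that depends on whether the argument lies in $S_\delta$, and the verification (their Lemma~\ref{silly6}) is a genuine induction on the coordinate index using both hypotheses on $K$ exactly where you predict they should enter. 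You have correctly identified where the difficulty lives and what it must achieve, but you have not supplied the mechanism; without an explicit $G_v$, an explicit edge involution, and a proof that the resulting amalgam has trivial core in the arc-stabiliser, the construction may collapse and the proof is incomplete. (Your alternative suggestion of an explicit cyclic closing-up ``as for the Praeger--Xu graphs'' is not known to generalise beyond very special local groups such as $\D_4$, which is why the paper routes everything through residual finiteness.)
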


The structure of the paper is very simple. Section~\ref{Sec:Summary} is a summary of all important results, with each new theorem getting its own section later for details and proofs.

\section{Summary}
\label{Sec:Summary}
We start with a review of previously known results about graph-restrictiveness of permutation groups. It is a rather trivial observation that regular permutation groups $L$ are graph-restrictive, with $c(L)=1$. One of the earliest non-trivial results with important applications towards the Weiss Conjecture is Theorem~\ref{LocalGardiner}, a variant of the Thompson-Wielandt theorem (see \cite{VanBon}). Given a $G$-arc-transitive graph $\Gamma$ and $(u,v)$ an arc of $\Gamma$, denote by $G_{uv}^{[1]}$ the subgroup of $G$ fixing $\Gamma(u)$ and $\Gamma(v)$ point-wise.
\begin{theo}\label{LocalGardiner}\cite[Corollary 2.3]{Gard}
Let $(\Gamma,G)$ be a locally-primitive pair and let $(u,v)$ be an arc of $\Gamma$. Then, $G_{uv}^{[1]}$ is a $p$-group for some prime $p$.
\end{theo}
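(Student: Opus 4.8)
The plan is to prove the statement by the Thompson--Wielandt method, viewing it as a propagation argument that is controlled by the primitivity of the local action. First I would record the elementary reductions. Since a primitive group is transitive, in a locally-primitive pair $G_v$ acts transitively on $\Gamma(v)$, so together with vertex-transitivity the pair $(\Gamma,G)$ is $G$-arc-transitive; in particular the local groups at $u$ and at $v$ are permutation isomorphic to the same primitive group. For a vertex $w$ write $G_w^{[1]}$ for the kernel of the action of $G_w$ on $\Gamma(w)$. Then $N:=G_{uv}^{[1]}=G_u^{[1]}\cap G_v^{[1]}$, and since $G_u^{[1]}$ and $G_v^{[1]}$ both fix the arc $(u,v)$ they lie in $G_{uv}$; consequently $N$ is normal in each of $G_{uv}$, $G_u^{[1]}$ and $G_v^{[1]}$. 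The goal is to show that $N$ is a $p$-group for a single prime $p$.

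The argument rests on two levers. The first is that primitivity constrains the relevant quotients: the action of $G_v^{[1]}$ on $\Gamma(u)$ has kernel $N$, so $G_v^{[1]}/N$ embeds as a \emph{normal} subgroup of the stabiliser of the point $v$ in the primitive group $G_u^{\Gamma(u)}$, and symmetrically $G_u^{[1]}/N$ embeds as a normal subgroup of the stabiliser of $u$ in $G_v^{\Gamma(v)}$. The second is a connectivity lever: because $\Gamma$ is connected and $(\Gamma,G)$ is arc-transitive, one checks in the standard way that $G=\langle G_u,G_v\rangle$. Hence any subgroup that is characteristic in $G_u^{[1]}$ and characteristic in $G_v^{[1]}$ is normalised by $G_u$ and by $G_v$, hence by $\langle G_u,G_v\rangle=G$; and since such a subgroup is contained in the vertex-stabiliser $G_v$ it fixes a vertex, so by faithful transitivity of $G$ it must be trivial.

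With these in place I would argue by contradiction. Suppose $N$ is not a $p$-group, so that $|N|$ is divisible by two distinct primes. Using coprime-action machinery (Sylow theory together with the Thompson-type $A\times B$ and coprime-commutator lemmas) and the normal structure imposed on $G_u^{[1]}/N$ and $G_v^{[1]}/N$ by the primitivity of the two local groups, the aim is to isolate a nontrivial subgroup $R\le N$ that is simultaneously characteristic in $G_u^{[1]}$ and in $G_v^{[1]}$. The second lever then forces $R=1$, contradicting $R\ne 1$; hence only one prime can divide $|N|$, and $N$ is a $p$-group.

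The main obstacle is precisely the middle step of the last paragraph: manufacturing the subgroup $R$ uniformly. One must reconcile the two primitivity constraints at $u$ and at $v$ simultaneously and run the coprime bookkeeping so that exactly one prime survives, and this has to be carried out without appealing to a case-by-case analysis of the possible structures of the primitive local group. This is the technical heart of the Thompson--Wielandt theorem, and for the details I would follow Gardiner~\cite{Gard} and the exposition in~\cite{VanBon}.
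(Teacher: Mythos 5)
Your proposal is not a proof of the theorem: it is the standard Thompson--Wielandt scaffolding with the theorem's actual content left out. The elementary reductions you record are correct --- local transitivity plus vertex-transitivity gives arc-transitivity, $N=G_{uv}^{[1]}=G_u^{[1]}\cap G_v^{[1]}$ is normal in $G_{uv}$, and $G_v^{[1]}/N$ embeds as a normal subgroup of the point-stabiliser $(G_u^{\Gamma(u)})_v$ of the primitive local group (and symmetrically). But the decisive step --- manufacturing, when $|N|$ is divisible by two distinct primes, a non-trivial subgroup $R\leq N$ characteristic in both $G_u^{[1]}$ and $G_v^{[1]}$ via the coprime analysis --- is precisely the step you defer to Gardiner and van Bon. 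That step \emph{is} the theorem; everything surrounding it is routine. For what it is worth, this coincides with how the paper itself treats the statement: it is imported without proof as \cite[Corollary~2.3]{Gard}, with \cite{VanBon} cited for the Thompson--Wielandt background, so your attempt amounts to the same citation dressed as an outline. Judged as a standalone proof, it has a genuine gap exactly at its technical heart.

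One of your asserted steps is also false as stated: connectivity and arc-transitivity do \emph{not} give $G=\langle G_u,G_v\rangle$. In a connected bipartite $G$-arc-transitive graph, $\langle G_u,G_v\rangle$ may preserve the bipartition; concretely, for $\Gamma$ a $4$-cycle (a locally-primitive pair, the local group being $\Sym(2)$) with $G=\Aut(\Gamma)\cong \D_4$ of order $8$, the stabilisers of two adjacent vertices are reflections generating a subgroup of order $4$. The slip is repairable: the connectivity induction shows that $H=\langle G_u,G_v\rangle$ has at most two vertex-orbits whose union is $\V\Gamma$, and since $R\leq N\leq G_{uv}$ with $R\unlhd G_u$ and $R\unlhd G_v$ (characteristic in the normal subgroups $G_u^{[1]}$, $G_v^{[1]}$), normality of $R$ in $H$ forces $R$ to fix every vertex of each orbit, hence $R=1$ by faithfulness --- no appeal to full generation or to the core of $G_v$ is needed. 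Alternatively, use the correct generation fact $G=\langle G_v,G_{\{u,v\}}\rangle$, which is exactly what this paper invokes in the proof of Lemma~\ref{TechnicalPLocalLemma}; that route requires arranging $R$ to be invariant under an arc-reversing element, e.g.\ by building $R$ symmetrically in $u$ and $v$.
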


Theorem~\ref{LocalGardiner} imposes a very strong restriction on the structure of $G_{uv}$. For example, if $\Gamma$ has valency $d$, then $G_{uv}$ contains a normal $p$-subgroup of index at most $(d-1)!^2$.  As we will see, Theorem~\ref{LocalGardiner} was the first step in the proof of many results regarding the Weiss Conjecture.

Despite considerable effort by many authors, the Weiss Conjecture is still open. Some important subcases have however been dealt with, the most important being the 2-transitive case, which has been settled as the culmination of work by Trofimov and Weiss.

\begin{theo}\label{TwoTransitiveTheo}
$2$-transitive groups are graph-restrictive.
\end{theo}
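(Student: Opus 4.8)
The plan is to bound $|G_{uv}|$ by controlling a normal series of the arc-stabiliser, reducing everything to a single $p$-group to which the earlier machinery applies. Since a $2$-transitive group is in particular transitive, every locally-$L$ pair $(\Gamma,G)$ is $G$-arc-transitive; and since $2$-transitivity implies primitivity, Theorem~\ref{LocalGardiner} is available. First I would record the standard reduction. Writing $G_v^{[1]}$ for the kernel of the action of $G_v$ on $\Gamma(v)$, one has $G_v/G_v^{[1]}\cong L$ and $G_v^{[1]}\leq G_{uv}$; moreover $G_{uv}/G_v^{[1]}$ embeds in a point-stabiliser of $L$, while $G_v^{[1]}/G_{uv}^{[1]}$ embeds in a point-stabiliser of the local action at $u$ (again a copy of $L$). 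Consequently
\[
|G_{uv}|\;\leq\;|L|^2\,|G_{uv}^{[1]}|,
\]
so, as $|L|$ depends only on $L$, the entire problem reduces to bounding the order of the group $G_{uv}^{[1]}$.

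By Theorem~\ref{LocalGardiner} the group $G_{uv}^{[1]}$ is a $p$-group for some prime $p$, so (assuming it is nontrivial, else we are done) the task becomes the control of this $p$-group via the amalgam method. Here I would study the descending chain of pointwise stabilisers of balls of increasing radius along a geodesic $v_0,v_1,v_2,\dots$ of $\Gamma$, together with the associated critical distance and the action of the centres of successive stabilisers. The chief factors of $G_v$ lying inside $G_v^{[1]}$ are modules for $L=G_v^{\Gamma(v)}$, and the crucial leverage is that $2$-transitivity severely constrains them: a point-stabiliser in $L$ acts transitively on the remaining neighbours, which forces a strong irreducibility on the way the $p$-group can be assembled from one neighbourhood to the next, and this is precisely what should bound both the length of the chain and the size of each factor.

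The main obstacle is exactly this last step: turning the qualitative constraint coming from $2$-transitivity into a uniform numerical bound. This is where the Classification of Finite Simple Groups enters, via the classification of finite $2$-transitive groups into affine and almost simple types. For the affine type one analyses $\GL$-modules and the associated failure-of-factorisation phenomena; for the almost simple type one runs through the known $2$-transitive representations of the finite simple groups and bounds the corresponding amalgams case by case. Carrying this analysis to completion — ensuring that no family produces arbitrarily long stabiliser chains — is the deep part, and it is precisely the content of the combined work of Trofimov and Weiss that this theorem records. I would therefore present the reduction above in full and then invoke their results for the final bound on the $p$-group $G_{uv}^{[1]}$.
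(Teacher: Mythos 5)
Your overall skeleton --- reduce to bounding the $p$-group $G_{uv}^{[1]}$ via Theorem~\ref{LocalGardiner}, split the $2$-transitive groups into affine and almost simple type by the Classification, and then invoke the Trofimov--Weiss corpus --- is the paper's approach, and your opening reduction $|G_{uv}|\leq |L|^2\,|G_{uv}^{[1]}|$ is correct. The gap is in the step ``run through the known $2$-transitive representations of the finite simple groups \dots case by case'': the literature you propose to invoke does not treat every $2$-transitive group. It covers only the affine case~\cite{weissp} and the almost simple groups whose socle is $\PSL(n,q)$, $\PSU(3,q)$, $\Suz(q)$ or $\Ree(q)$ in its natural action~\cite{weissBN,weissu,trof1,trof2,trof3,trof4}; there is no amalgam analysis anywhere for, say, $\Sym(n)$ and $\Alt(n)$ in their natural actions, the $2$-transitive symplectic groups, or the sporadic $2$-transitive groups, so as written your appeal to the literature does not close.

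The missing idea is the $\OO_p$-criterion. Lemma~\ref{TechnicalPLocalLemma} shows that if $G_{uv}^{[1]}$ is a non-trivial $p$-group then $\OO_p(L_x)\neq 1$ for a point-stabiliser $L_x$ of the local group; combined with the Thompson--Wielandt-type theorem this gives Corollary~\ref{TechnicalPLocal2}: $L$ is automatically $p$-graph-restrictive for every prime $p$ with $\OO_p(L_x)=1$, with no further work. Theorem~\ref{2TransPLocal} (this is where CFSG actually enters) then says that the only $2$-transitive groups admitting a prime $p$ with $\OO_p(L_x)\neq 1$ are the affine ones and the almost simple ones with the socles listed above (plus $\PGammaL(2,8)$ of degree $28$) --- precisely the families the cited papers handle. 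Inserting this criterion between your Thompson--Wielandt step and your appeal to the literature is what turns your open-ended ``case by case'' into a finite list that is actually covered.
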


The proof of Theorem~\ref{TwoTransitiveTheo} is scattered over many papers and hence this result is somewhat part of folklore. In Section~\ref{TwoTransitive}, we try to remedy this situation by collecting the various parts of the proofs and giving a brief overview of the argument. 

Another case of the Weiss Conjecture which was completed very recently is the case of primitive groups of so-called linear type. Let $q$ be a prime-power and let $V$ be a vector space over $\FF_q$ of dimension $n$. For $m\leq n$, denote by ${V \brack m}$ the set of all $m$-dimensional subspaces of $V$ and by $\PSL(n,q)^{V \brack m}$ the projective special linear group $\PSL(n,q)$ in its natural action on ${V \brack m}$. A primitive group $L$ is said to be of \emph{linear type} if it contains a normal subgroup isomorphic to $\PSL(n,q)^{V \brack m}$. In \cite{TrofSup,TrofWeiss,TrofWeiss2,WeissGrass}, it was proved that primitive groups of linear type are graph-restrictive.

Another important case of the Weiss Conjecture is the case of primitive groups of \emph{affine type}, which is almost complete (see~\cite[Theorem]{weissp}). (A primitive group $L$ is said to be of affine type if it contains a regular abelian normal subgroup $Q$. In particular, as $L$ is primitive, $Q$ is an elementary abelian $q$-group, for some prime $q$, and $L$ has degree a power of $q$.) 

By Burnside's Theorem~\cite[Theorem 3.5B]{DixMor}, transitive groups of prime degree are either 2-transitive or of affine type. Together with Tutte's result,~\cite[Theorem]{weissp} and Theorem~\ref{TwoTransitiveTheo} imply that transitive groups of prime degree are graph-restrictive.

Note that some of the above results were misquoted in \cite{ConderLiPraeger}. For example, \cite[Theorem]{weissp} was misquoted in~\cite[Theorem 3.1]{ConderLiPraeger} where the authors concluded that all primitive groups of affine type are graph-restrictive, which does not follow from~\cite[Theorem]{weissp}. This has unfortunate consequences. For example, they claim that primitive groups of degree at most 20 are graph-restrictive~\cite[Proposition 4.1]{ConderLiPraeger}, but their proof relies on~\cite[Theorem 3.1]{ConderLiPraeger} in an essential way. Moreover, in their proof, they also claim that almost simple groups with socle isomorphic to $\PSL(n,q)$ for some integer $n\neq 5$ are graph-restrictive but, again, this does not seem to follow from the papers they quote. In fact, to the best of our knowledge, it is still unknown whether the primitive group of affine type $\Sym(3)\wr\Sym(2)$ with its product action of degree $9$ is graph-restrictive. Similarly, it is also unknown whether the primitive group of almost simple type $\Sym(5)$ acting on the $10$ unordered pairs of a $5$-set is restrictive. We will return to the question of graph-restrictiveness of groups of small degree in Section~\ref{subsec}.

Little was previously known about graph-restrictiveness of imprimitive groups. Recently, Sami~\cite{Sami} has shown that dihedral groups of odd degree are graph-restrictive, generalising Tutte's theorem (as $\Sym(3)\cong\D_3$), and the third author generalised this result to the wider class of so-called $p$-sub-regular groups~\cite[Theorem 1.2]{Verret}. (Of course, in view of Theorem~\ref{Theo:ResAreSP}, all these groups are semiprimitive.) To the best of our knowledge, this concludes the list of permutation groups that were known to be graph-restrictive prior to this paper.

Praeger has proved~\cite{PConj} that a quasiprimitive group $L$ is graph-restrictive if and only if there exists a constant $c'(L)$ such that, for every locally-$L$ pair $(\Gamma,G)$ with $G$ quasiprimitive or biquasiprimitive and an arc $(u, v)$ of $\Gamma$, the inequality $|G_{uv}|\leq c'(L)$ holds. (A transitive permutation group is called \emph{quasiprimitive} if each of its non-trivial normal subgroups is transitive. It is called \emph{biquasiprimitive} if it is not quasiprimitive and each of its non-trivial normal subgroups has at most two orbits.) This lead her to conjecture that quasiprimitive groups are graph-restrictive, a conjecture stronger than the Weiss Conjecture but weaker than Conjecture~\ref{Conj:SP}.

Let us now discuss our new results supporting the conjecture that semiprimitive groups are graph-restrictive. It turns out that many of the important tools that are available for primitive groups are also available for semiprimitive groups. For example, the starting point for most of the results in the primitive case is Theorem~\ref{LocalGardiner}, which was recently generalised to the semiprimitive case by the second author.

\begin{theo}\label{LocalSpiga}\cite[Corollary~$3$]{Spiga}
Let $(\Gamma,G)$ be a locally-semiprimitive pair and let $(u,v)$ be an arc of $\Gamma$. Then, $G_{uv}^{[1]}$ is a $p$-group for some prime $p$.
\end{theo}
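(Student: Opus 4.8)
The statement is a Thompson--Wielandt theorem with the local primitivity hypothesis of Theorem~\ref{LocalGardiner} relaxed to semiprimitivity, so the plan is to rerun Gardiner's argument, isolate the single place where primitivity is genuinely used, and show that semiprimitivity still suffices there. Throughout I would write $D=G_{uv}^{[1]}=G_u^{[1]}\cap G_v^{[1]}$, where $G_w^{[1]}$ denotes the kernel of the action of $G_w$ on $\Gamma(w)$, and $L=G_v^{\Gamma(v)}$. First I would record the standard structural facts. Since $G_u^{[1]}$ fixes $u$ and every neighbour of $u$ (in particular $v$), one has $G_u^{[1]}\le G_{uv}$, and together with $G_u^{[1]}\trianglelefteq G_u$ this gives $G_u^{[1]},G_v^{[1]}\trianglelefteq G_{uv}$, whence $D\trianglelefteq G_{uv}$. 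Moreover $G_v^{[1]}/D\cong (G_v^{[1]})^{\Gamma(u)}$ is a normal subgroup of the stabiliser of $v$ in the local group $G_u^{\Gamma(u)}\cong L$, and symmetrically with $u$ and $v$ interchanged. One may assume $D\neq 1$; using the Fitting subgroup $F(D)=\prod_r O_r(D)$ one may fix a prime $p$ with $P:=O_p(D)\neq 1$, and since $P$ is characteristic in $D$ it is normal in $G_{uv}$. The goal is then to show that no prime $q\neq p$ divides $|D|$, so that $D=G_{uv}^{[1]}$ is a $p$-group.

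The heart of the matter, exactly as in Gardiner's proof, is a propagation argument combining arc-transitivity with the connectivity of $\Gamma$. The essential tool is the elementary fact that a subgroup $K$ normalised by $G$ and contained in some $G_w^{[1]}$ must be trivial: by vertex-transitivity and $K\trianglelefteq G$, conjugation places $K$ inside $G_x^{[1]}$ for every vertex $x$, so $K$ fixes every ball of radius one and hence, by connectivity and faithfulness, $K=1$. Assuming a second prime $q$ divides $|D|$, I would apply this mechanism to a suitable $q$-local normal subgroup of $G_{uv}$ contained in $D$ and to its normal closures under $G_u$ and $G_v$ in turn, manufacturing a \emph{non-trivial} normal subgroup $N$ of the local group $L$ whose order is governed by $q$. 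In the locally-primitive setting $N$ is necessarily transitive (primitive groups being quasiprimitive), and transitivity of $N$ is precisely what Gardiner's argument feeds back into the graph to force $q=p$, a contradiction. The construction of $N$ and the verification that $N\neq 1$ constitute the technical core of this step.

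This is the one point at which primitivity is invoked, and it is here that semiprimitivity must take over. By definition the normal subgroup $N\trianglelefteq L$ is either transitive or semiregular. The transitive alternative reproduces Gardiner's argument and yields the desired contradiction. The genuinely new case --- and the one I expect to be the main obstacle --- is when $N$ is non-trivial but semiregular, a possibility that cannot occur when $L$ is primitive. Here the plan is to exploit freeness directly: a semiregular normal subgroup acts with all orbits of size $|N|$, which rigidly constrains the orders of the point stabilisers in $L$ and, through the isomorphism $G_v^{[1]}/D\cong (G_v^{[1]})^{\Gamma(u)}$ normal in such a stabiliser, the prime content of the relevant sections of $G_v^{[1]}$. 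Feeding this back through the propagation mechanism should show that the $q$-part cannot survive, giving again $q=p$; making the semiregular branch close is where the real work lies.

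Assembling the two cases, $|D|$ is divisible by the single prime $p$, that is, $D=G_{uv}^{[1]}$ is a $p$-group, as required. The subtlety to watch throughout is careful bookkeeping of which subgroups are normal in $L$ versus merely normal in a point stabiliser of $L$: it is the normal-closure step that upgrades the latter to the former and thereby makes the transitive-or-semiregular dichotomy applicable at all.
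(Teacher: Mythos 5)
The first thing to say is that this paper contains no proof of Theorem~\ref{LocalSpiga} at all: the statement is quoted from \cite[Corollary~3]{Spiga}, a separate (submitted) paper, so there is no internal argument to compare yours against, and your proposal has to stand on its own as a proof. It does not: it is a strategy outline in which the two decisive steps are explicitly deferred (``the construction of $N$ and the verification that $N\neq 1$ constitute the technical core'' and ``making the semiregular branch close is where the real work lies''). Both deferrals hide genuine obstructions. For the first: the normal subgroups that the standard Thompson--Wielandt setup actually hands you, such as $(G_v^{[1]})^{\Gamma(u)}$, are normal in the \emph{point stabiliser} $(G_u^{\Gamma(u)})_v$ of the local group, not in the local group $L$ itself, and the transitive-or-semiregular dichotomy applies only to normal subgroups of $L$. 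Your proposed fix --- pass to the normal closure --- destroys exactly the structure you need: the normal closure in $L$ of a $q$-subgroup of a point stabiliser is in general not a $q$-group (indeed $\OO_q(L)$ may be trivial while $\OO_q(L_x)\neq 1$), so after the upgrade you no longer have a subgroup ``whose order is governed by $q$'' to feed into the contradiction. Note also that in the primitive case the dichotomy is not really how primitivity enters Gardiner's argument (maximality of point stabilisers is), so there is no template to ``rerun'' here.

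The second gap is more serious. Arriving at a non-trivial \emph{semiregular} normal subgroup $N\unlhd L$ is not in the vicinity of a contradiction: semiprimitive groups routinely possess such subgroups --- every regular group does, and so do the groups $L=G/K$ with $\SL(2,p)\le G\le\GL(2,p)$ constructed in Section~\ref{subsec} of this very paper (central and unipotent-free semiregular normal subgroups abound there) --- and the theorem is true for all of them. So in the semiregular branch the prime $q$ must be killed by an actual argument about how $N$ interacts with the sections $G_v^{[1]}/G_{uv}^{[1]}$; your sentence that the propagation mechanism ``should show that the $q$-part cannot survive'' is a statement of the theorem, not a proof of it. The coprime-action and commutator analysis needed to close this branch is precisely the content of \cite[Corollary~3]{Spiga}, which is why the result is a standalone paper rather than a remark appended to Theorem~\ref{LocalGardiner}. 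In short: you have correctly located where primitivity is used and where semiprimitivity must take over, but the proposal supplies no mechanism at either of the two points where the proof actually happens.
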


Theorem~\ref{LocalSpiga} is very useful to prove that certain semiprimitive permutation groups are graph-restrictive. We will give some examples of this, but first we must introduce the following more general version of Definition~\ref{def:Restrictive}.

\begin{definition}
Let $p$ be a prime. A transitive permutation group $L$ is \emph{$p$-graph-restrictive} if there exists a constant
$c(p,L)$ such that, for every locally-$L$ pair $(\Gamma,G)$, the largest $p$-th power dividing the order of an arc-stabiliser is bounded above by $c(p,L)$.
\end{definition}

Note that a transitive permutation group $L$ is graph-restrictive if and only if it is $p$-graph-restrictive for every prime $p$ dividing the order of a point-stabiliser $L_x$. Theorem~\ref{LocalSpiga} has the following easy corollary, which will be proved in Section~\ref{TwoTransitive}. (Recall that, for a prime $p$, $\OO_p(G)$ denotes the largest normal $p$-subgroup of $G$.)

\begin{cor}\label{TechnicalPLocal2}
Let $L$ be a semiprimitive group acting on $\Omega$, let $x\in\Omega$ and let $p$ be a prime. If $\OO_p(L_x)=1$, then $L$ is $p$-graph-restrictive.
\end{cor}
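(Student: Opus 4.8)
The plan is to localise the whole question to the pointwise stabiliser $G_{uv}^{[1]}$ and then feed it into Theorem~\ref{LocalSpiga}. Write $G_a^{[1]}$ for the pointwise stabiliser of $\Gamma(a)$ in $G$, so that $G_a/G_a^{[1]}\cong G_a^{\Gamma(a)}\cong L$, and $G_a^{[1]}\le G_{ab}$ whenever $(a,b)$ is an arc. First I would record the chain $G_{uv}\ge G_v^{[1]}\ge G_{uv}^{[1]}$ and analyse its quotients: $G_{uv}/G_v^{[1]}$ is isomorphic to a point-stabiliser in $L$, while $G_v^{[1]}/G_{uv}^{[1]}\cong (G_v^{[1]})^{\Gamma(u)}$ embeds in $L=G_u^{\Gamma(u)}$. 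Hence the $p$-part of $|G_{uv}|$ is at most $|L_x|\,|L|$ times the $p$-part of $|G_{uv}^{[1]}|$, and it suffices to bound this last quantity by a constant depending only on $L$ and $p$.

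Since $L$ is semiprimitive, the pair $(\Gamma,G)$ is locally-semiprimitive, so Theorem~\ref{LocalSpiga} shows that $G_{uv}^{[1]}$ is an $r$-group for some prime $r$. If $r\neq p$, then the $p$-part of $|G_{uv}^{[1]}|$ is $1$ and there is nothing more to do; the entire content lies in the case $r=p$, where $G_{uv}^{[1]}$ is a $p$-group. Here I claim that in fact $G_{uv}^{[1]}=1$.

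The key local step, which I expect to be the main obstacle, is to show that for every arc $(a,b)$ one has $\OO_p(G_b^{[1]})\le G_{ab}^{[1]}$. The idea is that $\OO_p(G_b^{[1]})$ is characteristic in $G_b^{[1]}$, hence normal in $G_b$, and already lies in $G_b^{[1]}\le G_{ab}$; thus it is normal in $G_{ab}$, and its image in $G_a^{\Gamma(a)}\cong L$ is a normal $p$-subgroup of the point-stabiliser $(G_{ab})^{\Gamma(a)}\cong L_x$. Because $\OO_p(L_x)=1$, this image is trivial, so $\OO_p(G_b^{[1]})$ fixes $\Gamma(a)$ pointwise and therefore lies in $G_a^{[1]}\cap G_b^{[1]}=G_{ab}^{[1]}$. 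The delicate part is precisely the normality bookkeeping needed to produce a \emph{genuine} normal $p$-subgroup of $L_x$; this is exactly the place where the hypothesis $\OO_p(L_x)=1$ takes over the role that primitivity plays in the classical Weiss-type arguments.

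To conclude in the case $r=p$: now $G_{ab}^{[1]}$ is a $p$-group and, being normal in $G_b^{[1]}$ (as $G_a^{[1]}\cap G_b^{[1]}\trianglelefteq G_b^{[1]}$), it is contained in $\OO_p(G_b^{[1]})$, which together with the previous paragraph gives $G_{ab}^{[1]}=\OO_p(G_b^{[1]})$. Using the symmetry $G_{ab}^{[1]}=G_{ba}^{[1]}$, this forces $\OO_p(G_a^{[1]})=\OO_p(G_b^{[1]})$ for every edge $\{a,b\}$, so by connectedness of $\Gamma$ the subgroup $W:=\OO_p(G_z^{[1]})$ is one and the same for all vertices $z$. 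Since $W\le G_z^{[1]}$ fixes $\Gamma(z)$ pointwise for every $z$, it fixes every vertex and hence $W=1$; in particular $G_{uv}^{[1]}=\OO_p(G_v^{[1]})=1$. Combining the two cases, the $p$-part of $|G_{uv}^{[1]}|$ is always $1$, so $c(p,L)=|L_x|\,|L|$ witnesses that $L$ is $p$-graph-restrictive.
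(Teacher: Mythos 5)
Your proof is correct and follows essentially the same route as the paper: both reduce via Theorem~\ref{LocalSpiga} to the case where $G_{uv}^{[1]}$ is a $p$-group, and both then exploit $\OO_p(L_x)=1$ by pushing a suitable normal $p$-subgroup into the local group $G_{uv}^{\Gamma(u)}\cong L_x$, using connectedness and faithfulness to kill the globally normal alternative. The only cosmetic difference is that the paper's Lemma~\ref{TechnicalPLocalLemma} tracks $\OO_p(G_{uv})$ and invokes $G=\langle G_v,G_{\{u,v\}}\rangle$ directly, whereas you track $\OO_p(G_v^{[1]})$ and propagate it along edges; the underlying mechanism is identical.
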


In Section~\ref{p-restrictive}, we will use a result of Glauberman about normalisers of $p$-groups to prove Corollary~\ref{Cor:p-restrictive}, which is a generalisation of~\cite[Theorem 1.2]{Verret}.

\begin{cor}
\label{Cor:p-restrictive}
Let $p$ be a prime and let $L$ be a transitive permutation group on $\Omega$. Let $x\in\Omega$ and let $P$ be a Sylow $p$-subgroup of $L_x$. Suppose that 
\begin{enumerate}
\item $|P|=p$, and 
\item there exists $l\in L$ such that $\langle P,P^l\rangle$ is transitive on $\Omega$.
\end{enumerate}
Then, $L$ is $p$-graph-restrictive. In fact, we can take $c(p,L)=p^6$ if $p$ is odd and $c(p,L)=16$ if $p=2$.
\end{cor}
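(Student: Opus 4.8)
The plan is to bound the $p$-part of the arc-stabiliser $G_{uv}$ for an arbitrary locally-$L$ pair $(\Gamma,G)$, and the first step is the standard reduction to the local subgroup $G_{uv}^{[1]}$. Since the map sending $g\in G_{uv}$ to its pair of induced permutations $(g^{\Gamma(u)},g^{\Gamma(v)})$ has kernel exactly $G_{uv}^{[1]}$, we get an embedding $G_{uv}/G_{uv}^{[1]}\hookrightarrow (G_{uv})^{\Gamma(u)}\times(G_{uv})^{\Gamma(v)}$, and each factor is permutation isomorphic to a point-stabiliser of $L$. By hypothesis~(1) the $p$-part of a point-stabiliser is $p$, so $|G_{uv}/G_{uv}^{[1]}|_p$ divides $p^2$. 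Hence it suffices to prove $|G_{uv}^{[1]}|_p\le p^4$ when $p$ is odd and $|G_{uv}^{[1]}|_p\le 2^2$ when $p=2$; multiplying by the factor $p^2$ then yields $c(p,L)=p^6$ and $c(2,L)=16$ respectively, the latter recovering Tutte's bound in the instance $L=\Sym(3)$.

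Next I would observe that hypothesis~(2) forces $L$ to be semiprimitive: for a normal subgroup $N$ of $L$, either $P\le N$, in which case $N$ contains the transitive group $\langle P,P^l\rangle$ and is itself transitive, or $P\cap N=1$, in which case $N\cap P^g=1$ for every $g\in L$ and a short argument shows that $N$ is transitive or semiregular. With semiprimitivity in hand I would record the dichotomy coming from $|P|=p$: either $\OO_p(L_x)=1$, in which case $L$ is already $p$-graph-restrictive by Corollary~\ref{TechnicalPLocal2}, or $\OO_p(L_x)=P$. I may therefore assume $\OO_p(L_x)=P$ has order $p$. Invoking Theorem~\ref{LocalSpiga}, $G_{uv}^{[1]}$ is a $q$-group for some prime $q$; if $q\neq p$ then $|G_{uv}^{[1]}|_p=1$ and we are done, so the essential case is $q=p$, where standard arguments in the spirit of Gardiner and Weiss show that $G_v^{[1]}$ is a $p$-group and each quotient $G_v^{[i]}/G_v^{[i+1]}$ embeds into a product of copies of $\OO_p(L_x)=P$.

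The heart of the proof is then the bound on the $p$-group $G_{uv}^{[1]}$, and this is where Glauberman's theorem on normalisers of $p$-subgroups enters. The naive estimate from the action on the spheres around $v$ is valency-dependent, since a priori $(G_{uv}^{[1]})^{\Gamma(w)}$ could be nontrivial for every neighbour $w$; the real task is to make the bound independent of the valency. The idea is to exploit hypothesis~(2) to produce, inside $G$, a pair of conjugate local $p$-subgroups whose join acts transitively on the relevant neighbourhood, placing us in the configuration controlled by Glauberman's result. Applied to the normaliser of a Thompson-type characteristic subgroup of a Sylow $p$-subgroup, this forces the higher stabiliser $G_{uv}^{[2]}$ to be trivial and $(G_{uv}^{[1]})^{\Gamma(w)}$ to be nontrivial for only boundedly many $w$, giving $|G_{uv}^{[1]}|_p\le p^4$ for odd $p$.

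The main obstacle is precisely this valency-independence: converting the transitivity hypothesis~(2) into a genuine restriction on how $G_{uv}^{[1]}$ can act simultaneously across the different neighbourhoods, and feeding that restriction correctly into Glauberman's normaliser theorem. A secondary obstacle is the prime $2$, for which Glauberman's conclusion is sharper and the relevant $p$-stability behaviour differs; the case $p=2$ would therefore need a separate, Tutte-style analysis that tightens the estimate to $|G_{uv}^{[1]}|_2\le 2^2$ and hence $c(2,L)=16$.
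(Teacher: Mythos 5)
Your proposal has a genuine gap at its core: the step that is supposed to produce the bound is never carried out, and you say so yourself (``the main obstacle is precisely this valency-independence\dots''). The paper's proof does not bound $|G_{uv}^{[1]}|_p$ separately and multiply by $p^2$, nor does it apply Glauberman to the normaliser of a Thompson-type characteristic subgroup; that is not what Theorem~\ref{Theo:Glaub} says. The cited result of Glauberman concerns a subgroup $P$ and an element $g$ with $P\cap P^g$ normal of prime index $p$ in $P^g$, such that $g$ normalises no non-identity normal subgroup of $P$ and $P\cap \Z(\tilde{G})=1$ for $\tilde{G}=\langle P,P^g,\ldots,P^{g^n}\rangle$; the conclusion bounds $|P|$ itself. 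The whole content of the paper's proof is the construction of this configuration: take $P$ a Sylow $p$-subgroup of $G_{uv}$, so that $Q=P\cap G_v^{[1]}$ has index $p$ in $P$ by hypothesis~(1); use hypothesis~(2) to find $l\in G_v$ with $\langle P,P^l\rangle$ transitive on $\Gamma(v)$, set $w=u^l$, and choose a Sylow $p$-subgroup $R$ of $G_v^{[1]}P^l\leq G_{wv}$ containing $Q$, so that $P\cap R=Q$ is normal of index $p$ in both; then use arc-transitivity to write $R=P^g$ with $(u,v)^g=(v,w)$. Now $\langle P,g\rangle$ is arc-transitive and $\tilde{G}=\langle P,P^g,P^{g^2}\rangle$ is edge-transitive, which gives the two hypotheses of Glauberman's theorem, and the theorem with $n=2$ yields $|P|\leq p^6$ directly; the supplementary clause (for $n=2$, $p=2$, $t=6$) together with $t\neq 5$ gives $|P|\leq 16$ for $p=2$ with no separate Tutte-style analysis. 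None of this construction appears in your proposal, and without it Glauberman's theorem simply has nothing to act on.

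Two secondary problems. First, your detour through semiprimitivity is both unproved and unnecessary: the ``short argument'' that a normal subgroup $N$ with $P\cap N=1$ is transitive or semiregular is not supplied (the easy case is $p\mid |N_x|$, which forces $N\geq\langle P,P^l\rangle$; the case $N_x\neq 1$ with $p\nmid|N_x|$ is not obviously excluded), and the paper's proof uses neither Theorem~\ref{LocalSpiga} nor Corollary~\ref{TechnicalPLocal2}. Second, the assertions that $G_v^{[1]}$ is a $p$-group, that $G_{uv}^{[2]}$ is forced to be trivial, and that $|G_{uv}^{[1]}|_p\leq p^4$ are all unsubstantiated; the last of these is exactly the statement you were supposed to prove, relocated rather than established.
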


\section{Examples and groups of small degrees}\label{subsec}
In this section, we give a few examples of how Corollary~\ref{TechnicalPLocal2} and Corollary~\ref{Cor:p-restrictive} can be combined to show that certain semiprimitive permutation groups are graph-restrictive and examine the status of Conjecture~\ref{Conj:SP} for groups of small degree.

\begin{example}
Let $n\geq 3$ be odd and let $L=\D_n$ be the dihedral group of order $2n$ in its natural action on $n$ points. Then $|L_x|=2$ and, if $l$ is a generator of the cyclic subgroup $\C_n\leq L$, then $\langle L_x,L_x^l\rangle=L$. By Corollary~\ref{Cor:p-restrictive}, it follows that $L$ is graph-restrictive (in fact, we can take $c(L)=16$). 
\end{example}

\begin{example}
Let $p$ be a prime. Denote by $Z$ the centre of $\GL(2,p)$, that is, the subgroup of $\GL(2,p)$ consisting of the scalar matrices. Let $G$ be a subgroup of $\GL(2,p)$ with $\SL(2,p)\leq G$, and let $K$ be a subgroup of $G\cap Z$. Clearly, $\GL(2,p)$ acts as a group of automorphisms on the $2$-dimensional vector space of row vectors $\mathbb{F}_p^2$. We let $\Omega$ denote the set of orbits of $K$ on $\mathbb{F}_p^{2}\setminus \{0\}$. Since $K\unlhd G$, the group $G$ acts on $\Omega$ and, by a direct computation, we see that $K$ is the kernel of the action of $G$ on $\Omega$. Write $L=G/K$ for the permutation group induced by $G$ on $\Omega$. 

\smallskip

\noindent\textsc{Claim. }The group $L$ is graph-restrictive.

\smallskip

\noindent 
We prove this claim as follows. Write $r=|G:\SL(2,p)|$. We let $e_1=(1, 0)$, $e_2=(0, 1)$, $\alpha=e_1^K$ and $\beta=e_2^K$. Since $\SL(2,p)\leq G$, the group $G$ is transitive on $\mathbb{F}_p^{2}\setminus\{0\}$ and hence
\[
L_\alpha=\frac{G_{e_1}K}{K}\cong \frac{G_{e_1}}{K\cap G_{e_1}}\cong G_{e_1}=\left\{
\left(
\begin{array}{cc}
1&0\\
a&b\\
\end{array}
\right)\mid 
a,b\in \mathbb{F}_p,b^r=1
\right\}\cong C_p\rtimes C_r.
\]
It follows that $L_\alpha$ is a Frobenius group with Frobenius kernel of size $p$ and with Frobenius complement of size $r$. In particular, $\OO_q(L_\alpha)\neq 1$ if and only if $q=p$. Thus, by Corollary~\ref{TechnicalPLocal2}, $L$ is $q$-graph-restrictive for each prime $q\neq p$.

Finally, let 
\[
U_{e_1}=\left\{
\left(
\begin{array}{cc}
1&0\\
a&1
\end{array}
\right)
\mid a\in \mathbb{F}_p
\right\}
\textrm{ and }
U_{e_2}=\left\{
\left(
\begin{array}{cc}
1&a\\
0&1\\
\end{array}
\right)
\mid a\in \mathbb{F}_p
\right\}
\]
be the unipotent radicals of $G_{e_1}$ and $G_{e_2}$, and let $P_\alpha=U_{e_1}K/K$ and $P_\beta=U_{e_2}K/K$. We have $|P_\alpha|=|P_\beta|=p$ and $P_\alpha$ is a Sylow $p$-subgroup of $L_\alpha$. Furthermore, since $\SL(2,p)$ is generated by the root subgroups $U_{e_1}$ and $U_{e_2}$, we have that $\langle P_\alpha,P_\beta\rangle$  is transitive on $\Omega$. Thus, by Corollary~\ref{Cor:p-restrictive}, $L$ is $p$-graph-restrictive. Therefore $L$ is graph-restrictive.~$_\blacksquare$

Note that, in the extremal cases when $G=\SL(2,p)$ or $G=\GL(2,p)$ and $K=1$ or $K=G\cap Z$, then $L$ is simply one of $\GL(2,p)$, $\SL(2,p)$, $\PGL(2,p)$ or $\PSL(2,p)$ in their natural action.
\end{example}

Using a computer algebra system containing a list of transitive permutation groups of small degree (for example \texttt{Magma}~\cite{magma}), it is rather straightforward to generate an exhaustive list of semiprimitive groups of small degree. By going through such a list and applying a combination of \cite[Theorem]{weissp}, Theorem~\ref{TwoTransitiveTheo}, Corollary~\ref{TechnicalPLocal2} and Corollary~\ref{Cor:p-restrictive}, we obtain the following result:

\begin{proposition}
A semiprimitive permutation group of degree at most 13 is
graph-restrictive unless possibly it is one of the following:
\begin{enumerate}
\item $\Sym(3)\wr\Sym(2)$ with its product action of degree $9$,
\item $\mathbb{Z}_3^2\rtimes\mathbb{Z}_2$ of degree $9$ ($\mathbb{Z}_3^2$
acts regularly on itself by
multiplication and $\mathbb{Z}_2$ acts on $\mathbb{Z}_3^2$ by inversion),
\item $\Sym(5)$ of degree $10$ acting on the unordered pairs of a $5$-set, or
\item $\Sym(4)$ of degree $12$  acting on the ordered pairs of a $4$-set.
\end{enumerate}
\end{proposition}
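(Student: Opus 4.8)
The plan is to reduce the statement to a finite verification, organised around the observation recorded just after the definition of $p$-graph-restrictiveness: a transitive group $L$ is graph-restrictive if and only if it is $p$-graph-restrictive for every prime $p$ dividing $|L_x|$. Concretely, I would first use a computer algebra system carrying the library of transitive groups of small degree (such as \texttt{Magma}~\cite{magma}) to list all transitive groups of each degree $d\le 13$ and keep exactly those that are semiprimitive, testing this directly by running over the normal subgroups and checking that each is either transitive or semiregular. For each surviving group $L$ I would record a point-stabiliser $L_x$ and the set of primes dividing $|L_x|$.

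Then, for each such $L$, I would apply the following hierarchy of criteria, stopping as soon as graph-restrictiveness is certified. If $L$ is $2$-transitive, Theorem~\ref{TwoTransitiveTheo} settles it in one stroke; if $L$ is primitive of affine type and lies within the established range of \cite[Theorem]{weissp}, that result applies. Failing these, I would treat one prime $p\mid|L_x|$ at a time: if $\OO_p(L_x)=1$, then Corollary~\ref{TechnicalPLocal2} yields $p$-graph-restrictiveness at once (the semiprimitivity of $L$, via Theorem~\ref{LocalSpiga}, being precisely the hypothesis it needs); and if $\OO_p(L_x)\neq1$, I would test the two hypotheses of Corollary~\ref{Cor:p-restrictive}, namely that a Sylow $p$-subgroup $P$ of $L_x$ has order $p$ and that some $l\in L$ makes $\langle P,P^l\rangle$ transitive on $\Omega$. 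The latter is a finite search over the conjugates of $P$ and is easily automated. A group $L$ is declared graph-restrictive precisely when every prime dividing $|L_x|$ is disposed of in this way.

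The proposition then amounts to checking that the only groups for which some prime survives all of the above are the four listed. A direct inspection shows that in each case the obstruction occurs at $p=2$, where $\OO_2(L_x)\neq1$ (ruling out Corollary~\ref{TechnicalPLocal2}) while both hypotheses of Corollary~\ref{Cor:p-restrictive} cannot be met. For $\Sym(3)\wr\Sym(2)$ and for $\Sym(5)$ on unordered pairs the Sylow $2$-subgroup of $L_x$ has order strictly greater than $2$ (orders $8$ and $4$ respectively), so hypothesis~(1) fails; the former is an affine group outside the reach of \cite[Theorem]{weissp}, while the latter is almost simple and not $2$-transitive. For $\Sym(4)$ on ordered pairs and for $\ZZ_3^2\rtimes\ZZ_2$ the Sylow $2$-subgroup has order $2$, but here hypothesis~(2) fails: any two involutions generate only a small dihedral group (of order at most $6$), never a transitive subgroup, so no admissible $l$ exists. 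By contrast, every other semiprimitive group in the range, and every remaining prime, is caught by some criterion; for instance the prime $3$ in $\Sym(5)$ on pairs is handled because a suitable conjugate gives $\langle P,P^l\rangle=\Alt(5)$, which is transitive on pairs.

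The main obstacle is not depth but the care needed in the negative verifications for the four exceptions. In particular, for $\Sym(4)$ on ordered pairs and for $\ZZ_3^2\rtimes\ZZ_2$, where the relevant Sylow $2$-subgroup has order $2$, one must genuinely rule out the existence of an element $l$ with $\langle P,P^l\rangle$ transitive, rather than merely failing to find one; this is exactly what leaves these groups undecided by the present methods. I would also take care to certify semiprimitivity correctly across the whole list, since it is strictly weaker than primitivity or quasiprimitivity, and an error there would either omit a relevant group or wrongly invoke Corollary~\ref{TechnicalPLocal2}.
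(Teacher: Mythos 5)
Your proposal is correct and follows essentially the same route as the paper, which likewise obtains the result by enumerating the semiprimitive groups of degree at most $13$ in \texttt{Magma} and then applying \cite[Theorem]{weissp}, Theorem~\ref{TwoTransitiveTheo}, Corollary~\ref{TechnicalPLocal2} and Corollary~\ref{Cor:p-restrictive} prime by prime. Your additional verifications for the four exceptional groups (the Sylow $2$-subgroup of $L_x$ being too large for $\Sym(3)\wr\Sym(2)$ and for $\Sym(5)$ on pairs, and two conjugate involutions never generating a transitive subgroup in the other two cases) are accurate and in fact supply more detail than the paper records.
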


In particular, Conjecture~\ref{Conj:SP} is true for permutation groups of degree at most 8. As we already noted, it is unknown whether the primitive group $\Sym(3)\wr\Sym(2)$ of degree 9 is graph-restrictive, hence even the Weiss Conjecture is not known to hold for groups of degree 9 or more.

\section{Proof of Theorem~\ref{Theo:ResAreSP}}\label{Sec:ResAreSP}

The proof of Theorem~\ref{Theo:ResAreSP} relies on a construction inspired by the \emph{wreath extension} (see~\cite[Section 8.1]{MaMa}), which has been used to solve some instances of the Embedding Galois Problem (see~\cite[Chapter 3]{IshFedLur}). We start by establishing some notation and some preliminary lemmas that will be necessary in the proof of Theorem~\ref{Theo:ResAreSP}.

Let $L$ be a finite permutation group on a set $\Lambda$ and let $K$ be an intransitive normal subgroup of $L$. Denote by $\Delta$ the set of orbits of $K$ on its action on $\Lambda$. Replacing $K$ by $\bigcap_{\lambda\in \Lambda}(KL_\lambda)$, we may assume that $K$ is the kernel of the action of $L$ on $\Delta$. We let $S$ denote the permutation group induced by $L$ on $\Delta$ and hence $S\cong L/K$. In particular, we have the short exact sequence

\begin{center}
\begin{tikzpicture}[descr/.style={fill=white,inner sep=2.5pt}]
\matrix (m) [matrix of math nodes, row sep=3em,
column sep=3em]
{1& K&L & S &1.\\};
\path[->,font=\scriptsize]
(m-1-1) edge node[auto] {} (m-1-2)
(m-1-2) edge node[auto] {} (m-1-3)
(m-1-3) edge node[auto] {$ \pi $} (m-1-4)
(m-1-4) edge node[auto] {} (m-1-5);
\end{tikzpicture}
\end{center}
Fix $\delta\in \Delta$ and $\lambda\in \delta$. Since $K$ is transitive on $\delta$, we have $L_\delta=KL_\lambda$, $S_\delta\cong L_\lambda/K_\lambda$ and the short exact sequence
\begin{center}
\begin{tikzpicture}[descr/.style={fill=white,inner sep=2.5pt}]
\matrix (m) [matrix of math nodes, row sep=3em,
column sep=3em]
{1& K_\lambda&L_\lambda & S_\delta &1\\};
\path[->,font=\scriptsize]
(m-1-1) edge node[auto] {} (m-1-2)
(m-1-2) edge node[auto] {} (m-1-3)
(m-1-3) edge node[auto] {$ \pi $} (m-1-4)
(m-1-4) edge node[auto] {} (m-1-5);
\end{tikzpicture}
\end{center}
(where we denote by $\pi$ the restriction  $\pi_{|L_\lambda}:L_\lambda\to S_\delta$).

Fix $T$ a transversal for the set of right cosets of $S_\delta$ in $S$. Without loss of generality, we assume that $1\in T$. Given $s\in S$, there exists a unique element of $T$, which we denote by  $s^\tau$, such that $S_\delta s=S_\delta s^\tau$. The correspondence $s\mapsto s^\tau$ defines a map $\tau:S\to T$ with $1^\tau=1$. 

\begin{lemma}\label{silly1}
If $x,s\in S$, then $(xs^{-1})^\tau s (x^\tau)^{-1}\in S_\delta$.
\end{lemma}
\begin{proof}
We have $S_\delta xs^{-1}=S_\delta(xs^{-1})^\tau$ and hence $S_\delta x=S_\delta (xs^{-1})^\tau s$. Furthermore, as $S_\delta x=S_\delta x^{\tau}$, we obtain $S_\delta=S_\delta (xs^{-1})^\tau s(x^{\tau})^{-1}$.
\end{proof}

Consider the set 
\begin{eqnarray*}
\Omega&=&\{f:S\to L_\lambda\mid f(yx)=f(x) \textrm{ for every }y\in S_\delta,x\in S\}.
\end{eqnarray*}
The elements of $\Omega$ are functions $f:S\to L_\lambda$ which (for each $x\in S$) are constant on the right coset $S_\delta x$ of $S_\delta$ in $S$, and hence they can be thought of as functions from $\Delta$ to $L_\lambda$.  The set $\Omega$ is a group isomorphic to $L_\lambda^{|\Delta|}$ under point-wise multiplication. Given $f\in \Omega$ and $g\in L$, let $f^g$ be the element of $\Omega$ defined by 
\begin{equation}\nonumber
f^g(x)=f(x(g^\pi)^{-1}).
\end{equation}
This defines a group action of $L$ on $\Omega$ and the semidirect product $\Omega\rtimes L$ is isomorphic to the standard wreath product $L_\lambda\wr_\Delta L$. Moreover, given an arbitrary positive integer $m$, by extending this action of $L$ on $\Omega$ to the (component-wise) action of $L$ on $\Omega^m$, we obtain a semidirect product $\Omega^m\rtimes L$ where the multiplication is given by 

\begin{eqnarray}\label{eq:binary}
(g,f_1,\ldots,f_m)(g',h_1,\ldots,h_m)&=&(gg',f_1^{g'}h_1,\ldots, f_m^{g'}h_m).
\end{eqnarray}

Consider the subset 

\begin{eqnarray}\label{A}
A=\{(g,f_1,\ldots,f_m)&\mid& g\in L, \textrm{ for each }i\in \{1,\ldots,m\}, f_i\in \Omega \textrm{ and,}\\\nonumber
&&\textrm{for every }x\in S, (f_i(x))^\pi=(x(g^\pi)^{-1})^\tau g^\pi(x^\tau)^{-1}\}
\end{eqnarray}
of $\Omega^m\rtimes L$. Note that, by Lemma~\ref{silly1}, the element $(x(g^\pi)^{-1})^\tau g^\pi(x^\tau)^{-1}$ in the definition of $A$ lies in $S_\delta$. 

\begin{lemma}\label{silly2}
The set $A$ is a subgroup of $\Omega^m\rtimes L$. 
\end{lemma}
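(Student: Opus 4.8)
The plan is to verify that $A$ is nonempty and closed under the multiplication given in~\eqref{eq:binary}; since $L$ is finite and $\Omega\cong L_\lambda^{|\Delta|}$ is finite, the ambient group $\Omega^m\rtimes L$ is finite, so these two checks suffice to conclude that $A$ is a subgroup (closure under inverses then being automatic). For brevity, for $g\in L$ and $x\in S$ I write $\Phi_g(x)=(x(g^\pi)^{-1})^\tau g^\pi(x^\tau)^{-1}$, so that the condition defining $A$ reads $(f_i(x))^\pi=\Phi_g(x)$ for all $i$ and all $x\in S$, and recall that $\Phi_g(x)\in S_\delta$ by Lemma~\ref{silly1}. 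First I would check that the identity element of $\Omega^m\rtimes L$, namely $(1,1,\dots,1)$ with $1\in\Omega$ the constant function equal to the identity of $L_\lambda$, lies in $A$: taking $g=1$ gives $g^\pi=1$, whence $\Phi_1(x)=x^\tau(x^\tau)^{-1}=1$, which is exactly the $\pi$-image of the identity of $L_\lambda$, as required.

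Next I would take two elements $(g,f_1,\dots,f_m)$ and $(g',h_1,\dots,h_m)$ of $A$ and examine their product $(gg',f_1^{g'}h_1,\dots,f_m^{g'}h_m)$. Since $\Omega$ is a group closed under the $L$-action, each $f_i^{g'}h_i$ again lies in $\Omega$ and $gg'\in L$, so the product already belongs to $\Omega^m\rtimes L$; the point is to verify the defining condition. Using that $\pi\colon L_\lambda\to S_\delta$ is a homomorphism together with the definition $f_i^{g'}(x)=f_i(x(g'^\pi)^{-1})$, for each $x\in S$ one obtains $((f_i^{g'}h_i)(x))^\pi=(f_i(x(g'^\pi)^{-1}))^\pi(h_i(x))^\pi$, which by the hypotheses on the two factors equals $\Phi_g(x(g'^\pi)^{-1})\,\Phi_{g'}(x)$. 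Writing $a=g^\pi$ and $b=g'^\pi$, so that $(gg')^\pi=ab$ and $x(g'^\pi)^{-1}=xb^{-1}$, closure therefore reduces to the cocycle-type identity $\Phi_g(xb^{-1})\,\Phi_{g'}(x)=\Phi_{gg'}(x)$.

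The crux is this last identity, and I expect it to be the only genuine computation. Substituting the definitions, the left-hand side is $\big((xb^{-1}a^{-1})^\tau\,a\,((xb^{-1})^\tau)^{-1}\big)\big((xb^{-1})^\tau\,b\,(x^\tau)^{-1}\big)$; here the inner factors $((xb^{-1})^\tau)^{-1}(xb^{-1})^\tau$ cancel telescopically, leaving $(xb^{-1}a^{-1})^\tau\,ab\,(x^\tau)^{-1}=(x(ab)^{-1})^\tau(ab)(x^\tau)^{-1}=\Phi_{gg'}(x)$, as needed. This establishes closure, and together with the identity-membership check the finiteness of $\Omega^m\rtimes L$ finishes the proof. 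The main obstacle is purely the bookkeeping in this telescoping cancellation: one must track carefully which arguments the transversal map $\tau$ is applied to, and it is precisely the defining property of $A$—that $(f_i(x))^\pi$ equals $\Phi_g(x)$ expressed through this particular coset representative—that makes the cancellation go through.
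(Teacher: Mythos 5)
Your proof is correct and takes essentially the same approach as the paper: the heart of both arguments is the identical telescoping computation showing $((f_i^{g'}h_i)(x))^\pi=(x((gg')^\pi)^{-1})^\tau (gg')^\pi(x^\tau)^{-1}$. The only difference is that the paper also verifies closure under inverses by an explicit computation, whereas you legitimately sidestep this step by invoking the finiteness of $\Omega^m\rtimes L$.
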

\begin{proof}
Let $(g,f_1,\ldots,f_m),(g',h_1,\ldots,h_m)\in A$. For each $i\in \{1,\ldots,m\}$, we have $f_i^{g'}h_i\in \Omega$.  Fix $x\in S$. Then, for each $i\in \{1,\ldots,m\}$, using the definition of $A$, we obtain
\begin{eqnarray*}
((f_i^{g'}h_i)(x))^\pi&=&(f_i^{g'}(x))^\pi(h_i(x))^\pi=(f_i(x(g'^\pi)^{-1}))^\pi(h_i(x))^\pi\\
&=&(((x(g'^\pi)^{-1})(g^\pi)^{-1})^\tau g^\pi ((x(g'^\pi)^{-1})^\tau)^{-1})((x(g'^\pi)^{-1})^\tau g'^\pi(x^\tau)^{-1})\\\nonumber
&=&(x((gg')^\pi)^{-1})^\tau (gg')^\pi(x^{\tau})^{-1}.
\end{eqnarray*}
From~$(\ref{eq:binary})$ and~$(\ref{A})$, this shows that the product $(g,f_1,\ldots,f_m)(g',h_1,\ldots,h_m)$ lies in $A$. Denote by $e:S\to L_\lambda$ the function with $e(x)=1$ for every $x\in S$. Clearly $(1,e,\ldots,e)$ is the identity of $\Omega^m\rtimes S$ and lies in $A$. Finally, if $(g,f_1,\ldots,f_m)$ is in $A$, then $(g^{-1},(f_1^{-1})^{g^{-1}},\ldots,(f_m^{-1})^{g^{-1}})$ is the inverse of $(g,f_1,\ldots,f_m)$. Fix $x\in S$. For each $i\in \{1,\ldots,m\}$, using the definition of $A$, we obtain 
$$(((f_i^{-1})^{g^{-1}})(x))^\pi=((f_i(xg^\pi))^\pi)^{-1}=(x^\tau g^\pi ((xg^\pi)^{\tau})^{-1})^{-1}=(xg^\pi)^\tau (g^\pi)^{-1}(x^\tau)^{-1}.$$
Thus $(g^{-1},(f_1^{-1})^{g^{-1}},\ldots,(f_m^{-1})^{g^{-1}})$ lies in $A$. 
\end{proof}

Consider the map  
\begin{equation*}
\varphi:A\to L\textrm{ defined by }(g,f_1,\ldots,f_m)^\varphi=g.
\end{equation*}

\begin{lemma}\label{silly3}
$\varphi$ is a surjective homomorphism.
\end{lemma}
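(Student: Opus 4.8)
The plan is to split the statement into its two assertions, handling the homomorphism property first, since it is immediate from the group law, and then surjectivity, which requires an explicit construction of preimages.

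For the homomorphism property I would simply read off the first coordinate of a product. By the multiplication rule~(\ref{eq:binary}), the $L$-component of $(g,f_1,\ldots,f_m)(g',h_1,\ldots,h_m)$ is $gg'$, independently of the function-components. Hence $((g,f_1,\ldots,f_m)(g',h_1,\ldots,h_m))^\varphi = gg' = (g,f_1,\ldots,f_m)^\varphi\,(g',h_1,\ldots,h_m)^\varphi$, so $\varphi$ respects multiplication. (That $A$ is closed under multiplication, so that the products in question lie in $A$, is exactly Lemma~\ref{silly2}.)

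For surjectivity, given an arbitrary $g\in L$, I would construct $f_1,\ldots,f_m\in\Omega$ so that $(g,f_1,\ldots,f_m)\in A$. The defining condition~(\ref{A}) prescribes, for each $x\in S$, that $(f_i(x))^\pi$ equal $c_g(x):=(x(g^\pi)^{-1})^\tau g^\pi (x^\tau)^{-1}$. Two observations make this achievable. First, by Lemma~\ref{silly1} each $c_g(x)$ lies in $S_\delta$, which is precisely the image of the surjection $\pi\colon L_\lambda\to S_\delta$ coming from the short exact sequence for $L_\lambda$; hence $c_g(x)$ admits a preimage in $L_\lambda$. Second, since both $x^\tau$ and $(x(g^\pi)^{-1})^\tau$ depend only on the right coset $S_\delta x$ (as $\tau$ is constant on such cosets), the value $c_g(x)$ is itself constant on each coset $S_\delta x$. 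I would therefore choose, for each coset, a single $\pi$-preimage in $L_\lambda$ and declare $f_i$ to take that common value on the whole coset; this makes each $f_i$ constant on right cosets of $S_\delta$, that is $f_i\in\Omega$, and arranges $(f_i(x))^\pi=c_g(x)$ as required. Then $(g,f_1,\ldots,f_m)\in A$ and maps to $g$ under $\varphi$, establishing surjectivity.

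The only genuinely substantive point, and the step I would be most careful about, is the coset-invariance of $c_g$: one must confirm that replacing $x$ by $yx$ with $y\in S_\delta$ leaves both $(x(g^\pi)^{-1})^\tau$ and $x^\tau$ unchanged, which is what legitimises placing $f_i$ in $\Omega$ in the first place. This follows directly from the definition of $\tau$ together with the fact that right multiplication by $(g^\pi)^{-1}$ permutes the right cosets of $S_\delta$, but it is the hinge connecting the surjectivity of $\pi$ to membership in $\Omega$, and hence the heart of the surjectivity argument.
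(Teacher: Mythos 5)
Your proposal is correct and follows essentially the same route as the paper: the homomorphism property is read off from the first coordinate of the multiplication rule~(\ref{eq:binary}), and surjectivity is obtained by lifting the $S_\delta$-valued function $x\mapsto (x(g^\pi)^{-1})^\tau g^\pi(x^\tau)^{-1}$ through the surjection $\pi\colon L_\lambda\to S_\delta$ after checking it is constant on right cosets of $S_\delta$. The only cosmetic difference is that the paper fixes a single set-theoretic section $\varepsilon\colon S_\delta\to L_\lambda$ of $\pi$ and composes, whereas you choose preimages coset by coset; the verification that $(yx)^\tau=x^\tau$ and $(yx(g^\pi)^{-1})^\tau=(x(g^\pi)^{-1})^\tau$ is exactly the hinge the paper uses as well.
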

\begin{proof}
From~$(\ref{eq:binary})$, $\varphi$ is a homomorphism.  For each $s\in S_\delta$, fix an element of $L_\lambda$, which we denote by $s^\varepsilon$, with $(s^\varepsilon)^\pi=s$. Since $\pi:L_\lambda\to S_\delta$ is surjective, $\varepsilon:S_\delta\to L_\lambda$ is a well-defined mapping with $s^{\varepsilon\pi}=s$ for every $s\in S_\delta$.

Fix $g\in L$. Given $x\in S$, we see from Lemma~\ref{silly1} that $(x(g^\pi)^{-1})^{\tau}g^\pi(x^{\tau})^{-1}\in S_\delta$. Therefore, the function 
$f_{g}:S\to L_\lambda$ with $f_g(x)=((x(g^\pi)^{-1})^{\tau}g^\pi(x^{\tau})^{-1})^\varepsilon$  is well-defined. If $y\in S_\delta$ and $x\in S$, then $(yx)^\tau=x^{\tau}$ and $(yx(g^\pi)^{-1})^\tau=(x(g^\pi)^{-1})^\tau$. Therefore 
 $$f_{g}(yx)=((yx(g^\pi)^{-1})^\tau g^\pi ((yx)^\tau)^{-1})^\varepsilon=((x(g^\pi)^{-1})^\tau g^\pi(x^\tau)^{-1})^\varepsilon=f_g(x).$$ 
 Since this holds for arbitrary $y\in S_\delta$ and $x\in S$, it follows that $f_g\in\Omega$. Now $$f_g(x)^\pi=(((x(g^\pi)^{-1})^{\tau}g^\pi(x^{\tau})^{-1})^\varepsilon)^\pi=((x(g^\pi)^{-1})^{\tau}g^\pi(x^{\tau})^{-1})^{\varepsilon\pi}=(x(g^\pi)^{-1})^{\tau}g^\pi(x^{\tau})^{-1}.$$ Finally, this shows that $(g,f_g,\ldots,f_g)\in A$ and $g=(g,f_g,\ldots,f_g)^\varphi$, which proves that $\varphi$ is surjective. 
\end{proof}

Let $M$ be the kernel of $\varphi$.  Note that $A$ does not necessarily split over $M$ and hence $A$ is not necessarily the semidirect product of $M$ with a subgroup of $A$ isomorphic to $L$. It can be shown that if $L_\lambda$ splits over $K_\lambda$, then $A$ splits over $M$. If $(g,f_1,\ldots,f_m)\in M$, then $g=(g,f_1,\ldots,f_m)^\varphi=1$ and, for each $i\in \{1,\ldots,m\}$, we have $(f_i(x))^\pi=(x(g^\pi)^{-1})^{\tau}g^\pi(x^\tau)^{-1}=1$. Hence $f_i(x)\in K_\lambda$ for every $x\in S$. Therefore $$M=\{(1,f_1,\ldots,f_m)\mid  f_i:S\to K_\lambda,f_i\in\Omega \textrm{ for each }i\in\{1,\ldots,m\}\}\cong K_\lambda^{|\Delta|m}.$$

Consider the subset $C$ of $A$ defined by 
\begin{eqnarray}\label{AC}
C&=&\{(g,f_1,\ldots,f_m)\in A\mid g\in L_\lambda\}.
\end{eqnarray}
Since $\varphi$ is a homomorphism, $C$ is a subgroup of $A$. 

\begin{lemma}\label{silly4}$M$ is the core of $C$ in $A$ and $M\cong K_\lambda^{|\Delta|m}$. The action of $A$ on the right cosets of $C$ is permutation isomorphic to the action of $L$ on $\Lambda$.
\end{lemma}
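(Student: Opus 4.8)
The plan is to run everything off the surjective homomorphism $\varphi\colon A\to L$ of Lemma~\ref{silly3}, treating this lemma as essentially an application of the correspondence theorem. Two facts are immediate from the definitions~$(\ref{A})$ and~$(\ref{AC})$: first, $C=\varphi^{-1}(L_\lambda)$, since $C$ consists exactly of those elements of $A$ whose image under $\varphi$ lies in $L_\lambda$; and second, $M=\Ker\varphi$ is contained in $C$, because every element of $M$ has first coordinate $1\in L_\lambda$. The isomorphism $M\cong K_\lambda^{|\Delta|m}$ has already been recorded in the paragraph preceding the statement, so nothing further is needed for that part.

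First I would produce the permutation isomorphism. Since $\varphi$ is surjective and $C=\varphi^{-1}(L_\lambda)$, the assignment $\Phi\colon Ca\mapsto L_\lambda\varphi(a)$ is a well-defined bijection from the right cosets of $C$ in $A$ to the right cosets of $L_\lambda$ in $L$; both well-definedness and injectivity reduce to the equivalence $a'a^{-1}\in C\iff\varphi(a')\varphi(a)^{-1}\in L_\lambda$, and surjectivity is just surjectivity of $\varphi$. This bijection intertwines the right-multiplication action of $A$ on the cosets of $C$ with the right-multiplication action of $L$ on the cosets of $L_\lambda$ along the homomorphism $\varphi$, since $\Phi(Cab)=L_\lambda\varphi(a)\varphi(b)=\Phi(Ca)\cdot\varphi(b)$. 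Finally, because $L$ is a transitive permutation group on $\Lambda$, the orbit--stabiliser correspondence $L_\lambda g\mapsto\lambda^{g}$ identifies the action of $L$ on the cosets of $L_\lambda$ with the action of $L$ on $\Lambda$; composing the two identifications gives the desired permutation isomorphism.

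Having this, I would read off the core. The core of $C$ in $A$ is, by definition, the kernel of the action of $A$ on the right cosets of $C$, which under the permutation isomorphism corresponds, via $\varphi$, to the kernel of the action of $L$ on $\Lambda$. As $L$ acts faithfully on $\Lambda$, this kernel is trivial, so an element $a\in A$ lies in the core precisely when $\varphi(a)=1$, that is, precisely when $a\in\Ker\varphi=M$; hence the core of $C$ in $A$ equals $M$. Equivalently, and without invoking the full permutation isomorphism, $M\trianglelefteq A$ together with $M\leq C$ forces $M$ into the core, while surjectivity of $\varphi$ and the triviality of the core of $L_\lambda$ in $L$ force $\varphi$ to kill the core, giving the reverse inclusion.

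I do not expect a genuine obstacle here: the substance of the construction is already carried by Lemmas~\ref{silly1}--\ref{silly3}, and this lemma merely transports the coset geometry of $(L,L_\lambda)$ up through $\varphi$. The only points demanding care are bookkeeping ones, namely keeping the left/right coset conventions and the direction of the action consistent throughout, and making explicit the (contextually harmless) use of transitivity and faithfulness of $L$ on $\Lambda$, which are exactly what let the coset action on $L/L_\lambda$ be replaced by the action on $\Lambda$ itself.
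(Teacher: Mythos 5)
Your proposal is correct and follows essentially the same route as the paper: both identify $C=\varphi^{-1}(L_\lambda)$ and $M=\Ker\varphi$, deduce that $M$ is the core of $C$ from $M\unlhd A$, $M\leq C$ and the core-freeness of $L_\lambda$ in $L$, and transport the coset action of $A$ on $A/C$ through $\varphi$ to the action of $L$ on $L/L_\lambda$, i.e.\ on $\Lambda$. The extra verifications you spell out (well-definedness of the coset bijection, equivariance, the explicit use of transitivity and faithfulness) are exactly the details the paper leaves implicit.
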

\begin{proof}
From Lemma~\ref{silly3}, the map $\varphi$ is a surjective homomorphism and hence $A/M\cong L$. Furthermore, $C^{\varphi}=L_\lambda$. 
As $M\leq C\leq A$, $M\unlhd A$ and $L_\lambda$ is core-free in $L$, we see that $M$ is the core of $C$ in $A$. Finally, the action of $A$ on the right cosets of $C$ is permutation isomorphic to the action of $L=A^{\varphi}$ on the right cosets of $L_\lambda=C^{\varphi}$, that is, permutation isomorphic to the action of $L$ on $\Lambda$.
\end{proof} 

Assume that $m$ is odd. Let $c=(g,f_1,\ldots,f_m)\in C$ and set
\begin{equation*}
c^\iota=(g,f_1,\ldots,f_m)^\iota=(f_1(1),f_1^{\iota_{1,c}},\ldots,f_m^{\iota_{m,c}})
\end{equation*}
where
\begin{eqnarray}\label{iota}
f_i^{\iota_{i,c}}(x)&=&\left\{
\begin{array}{lcl}\label{iota2}
g&&\textrm{if }x\in S_\delta \textrm{ and }i=1,\\
f_{i+1}(x)&&\textrm{if }x\in S_\delta, 2\leq i\leq m-1 \textrm{ and }i \textrm{ is even},\\
f_{i-1}(x)&&\textrm{if }x\in S_\delta, 3\leq i\leq m \textrm{ and }i \textrm{ is odd},\\
f_{i-1}(x)&&\textrm{if }x\notin S_\delta, 2\leq i\leq m-1 \textrm{ and }i \textrm{ is even},\\
f_{i+1}(x)&&\textrm{if }x\notin S_\delta, 1\leq i\leq m-2 \textrm{ and }i \textrm{ is odd},\\
f_m(x)&&\textrm{if }x\notin S_\delta \textrm{ and }i=m.
\end{array}
\right.
\end{eqnarray}
Note that, for each $i\in \{1,\ldots,m\}$, the definition of $\iota_{i,c}$ depends on $i$ and on the element $c$. In particular, to compute $f_i^{\iota_{i,c}}$ one needs to know $i$ and the coordinates of  $c$.

\begin{lemma}\label{silly5}The map $\iota$ is an automorphism of $C$ with $\iota^2=1$. 
\end{lemma}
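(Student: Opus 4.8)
The plan is to verify, in order, that $\iota$ maps $C$ into $C$, that it is a homomorphism, and that $\iota^2=1$; the homomorphism property together with $\iota^2=1$ then shows that $\iota$ is a self-inverse endomorphism of $C$, hence a bijection, hence an automorphism. Everything rests on a single preliminary observation about elements of $C$. If $c=(g,f_1,\ldots,f_m)\in C$, then $g\in L_\lambda$ and so $g^\pi\in S_\delta$. Evaluating the defining relation of $A$ in $(\ref{A})$ at $x=1$, and using $1^\tau=1$ together with $((g^\pi)^{-1})^\tau=1$ (valid since $(g^\pi)^{-1}\in S_\delta$), yields $(f_1(1))^\pi=g^\pi$. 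Thus the prospective new first coordinate $f_1(1)$ lies in $L_\lambda$ and projects onto $g^\pi$. Equally important, since $g^\pi\in S_\delta$, for every $x\in S$ we have $x(g^\pi)^{-1}\in S_\delta$ if and only if $x\in S_\delta$; that is, right translation by $(g^\pi)^{-1}$ preserves the partition of $S$ into $S_\delta$ and its complement. It is precisely this invariance that keeps the case distinctions in $(\ref{iota2})$ aligned under all the operations below.

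First I would show $c^\iota\in C$. Each function $f_i^{\iota_{i,c}}$ lies in $\Omega$: whether $x\in S_\delta$ is unaffected by left multiplication by $S_\delta$, so $f_i^{\iota_{i,c}}(yx)$ and $f_i^{\iota_{i,c}}(x)$ are computed from the same line of $(\ref{iota2})$, where each value is either the constant $g$ or some $f_j(yx)=f_j(x)$ (as $f_j\in\Omega$). For membership in $A$, the new first coordinate $f_1(1)$ projects onto $g^\pi$, so the right-hand side of the relation in $(\ref{A})$ is left unchanged; and for each $x$ and $i$ one reads off $f_i^{\iota_{i,c}}(x)$ from $(\ref{iota2})$ and applies the relation for $c$ to whichever index occurs (or uses $g^\pi$ directly in the case $x\in S_\delta$, $i=1$) to recover exactly that right-hand side. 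Hence $c^\iota\in A$, and since its first coordinate lies in $L_\lambda$, in fact $c^\iota\in C$.

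Next comes the homomorphism property, which I expect to be the main obstacle, since it couples the semidirect-product twist of $(\ref{eq:binary})$ with the case-defined map $\iota$. Writing $c=(g,f_1,\ldots,f_m)$ and $c'=(\bar g,\bar f_1,\ldots,\bar f_m)$, the product $cc'$ has group coordinate $g\bar g$ and $i$-th function $x\mapsto f_i(x(\bar g^\pi)^{-1})\bar f_i(x)$ by $(\ref{eq:binary})$. In $c^\iota(c')^\iota$ the group coordinate of $(c')^\iota$ is $\bar f_1(1)$, which projects onto $\bar g^\pi$, so the twist applied there is again by $(\bar g^\pi)^{-1}$. Setting $z=x(\bar g^\pi)^{-1}$, the preliminary observation gives $z\in S_\delta$ if and only if $x\in S_\delta$, so $f_i^{\iota_{i,c}}(z)$ is computed from the same line of $(\ref{iota2})$ as $\bar f_i^{\iota_{i,c'}}(x)$. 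Comparing the $i$-th coordinate of $(cc')^\iota$, namely the $\iota_{i,cc'}$-image of $x\mapsto f_i(z)\bar f_i(x)$, with the $i$-th coordinate of $c^\iota(c')^\iota$, namely $f_i^{\iota_{i,c}}(z)\,\bar f_i^{\iota_{i,c'}}(x)$, one checks case by case through $(\ref{iota2})$ that both sides coincide (for instance, when $x\in S_\delta$ and $i$ is even, both equal $f_{i+1}(z)\bar f_{i+1}(x)$, and similarly for the remaining five cases). The group coordinates agree as well, since the first coordinate of $(cc')^\iota$ is $f_1((\bar g^\pi)^{-1})\bar f_1(1)=f_1(1)\bar f_1(1)$, using the $\Omega$-constancy of $f_1$ on $S_\delta$.

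Finally, for $\iota^2=1$ I would compute $(c^\iota)^\iota$ coordinatewise. Its group coordinate is $f_1^{\iota_{1,c}}(1)=g$, by the first line of $(\ref{iota2})$ since $1\in S_\delta$. For the functions, applying $(\ref{iota2})$ twice one sees that on $S_\delta$ the transpositions of indices $(2\,3),(4\,5),\ldots,(m-1\,m)$ compose to the identity, and off $S_\delta$ the transpositions $(1\,2),(3\,4),\ldots,(m-2\,m-1)$ do likewise, while the coordinate $i=1$ on $S_\delta$ returns the value $f_1(1)=f_1(x)$ again by $\Omega$-constancy; here the hypothesis that $m$ is odd is exactly what guarantees that these two pairings exhaust the indices and leave no unmatched coordinate. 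Thus $(c^\iota)^\iota=c$. Being a homomorphism $C\to C$ with $\iota^2=1$, the map $\iota$ is bijective and therefore an automorphism of $C$, completing the proof.
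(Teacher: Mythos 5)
Your proposal is correct and follows essentially the same route as the paper's proof: the key observation $(f_1(1))^\pi=g^\pi$ obtained by evaluating the defining relation of $A$ at $x=1$, the fact that right translation by $(g^\pi)^{-1}$ preserves the $S_\delta$/complement dichotomy so that the case distinctions in the definition of $\iota$ stay aligned, and then coordinatewise case checking for closure, the homomorphism property, and $\iota^2=1$. The only difference is the (immaterial) order in which the homomorphism property and $\iota^2=1$ are verified.
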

\begin{proof}
Let $c=(g,f_1,\ldots,f_m)\in C$. We have $g^\pi\in L_\lambda^\pi=S_\delta$ and hence $(g^\pi)^\tau=((g^\pi)^{-1})^\tau=1$. Therefore, for each $i\in \{1,\ldots,m\}$, we obtain  $$(\dag)\qquad f_i(1)^\pi=((g^\pi)^{-1})^{\tau}g^\pi(1^\tau)^{-1}=g^\pi.$$ In particular, $f_1(1)\in L_\lambda$ and $f_1(1)^\pi=g^\pi$.

We first show that $c^\iota\in C$. From~$(\ref{iota2})$, we see that, for each $i\in \{1,\ldots,m\}$ and for each $x\in S$, the function $f_i^{\iota_{i,c}}$ is constant on the right coset $S_\delta x$ of $S_\delta$ in $S$ and therefore $f_i^{\iota_{i,c}}\in \Omega$.  Let $x\in S$. We have to show that, for each $i\in \{1,\ldots,m\}$, we have $(f_i^{\iota_{i,c}}(x))^\pi=(x(f_1(1)^\pi)^{-1})^\tau f_1(1)^\pi(x^\tau)^{-1}$. In particular, as $f_1(1)^\pi=g^\pi$, we have to show that $(f_i^{\iota_{i,c}}(x))^\pi=(x(g^\pi)^{-1})^\tau g^\pi (x^\tau)^{-1}$. Since $g^\pi\in S_\delta$ and $(g,f_1,\ldots,f_m)\in A$, this is clear from the definition of $f_i^{\iota_{i,c}}$, except possibly when $i=1$ and $x\in S_\delta$. Hence, we assume that $i=1$ and $x\in S_\delta$. In particular, $(x(g^\pi)^{-1})^\tau=x^\tau=1$. Thus, we have $$(f_1^{\iota_{1,c}}(x))^\pi=g^\pi=(x(g^\pi)^{-1})^\tau g^\pi(x^\tau)^{-1}.$$
Therefore, $c^\iota\in C$. 

Now we show that  $\iota^2$ is the identity permutation of $C$. We have

\begin{eqnarray*}
c^{\iota^2}&=&(f_1(1),f_1^{\iota_{1,c}},\ldots,f_m^{\iota_{m,c}})^\iota=(f_1^{\iota_{1,c}}(1),(f_1^{\iota_{1,c}})^{\iota_{1,c^\iota}},\ldots,(f_m^{\iota_{m,c}})^{\iota_{m,c^\iota}}).
\end{eqnarray*}
Now, from~$(\ref{iota})$ we have $f_1^{\iota_{1,c}}(1)=g$ and hence the first coordinate of $c^{\iota^2}$ equals the first coordinate of $c$. Furthermore, \[
(f_1^{\iota_{1,c}})^{\iota_{1,c^\iota}}=
\left\{
\begin{array}{ll}
f_1(x)&\textrm{if }x\in S_\delta\\
f_2^{\iota_{1,c}}(x)=f_1(x)&\textrm{if }x\notin S_\delta\\
\end{array}
\right.
\] 
and hence the second coordinate of $c^{\iota^2}$ equals the second coordinate of $c$. Assume that $2\leq i\leq m-1$ is even. We have
\[
(f_i^{\iota_{1,c}})^{\iota_{1,c^\iota}}=
\left\{
\begin{array}{ll}
f_{i+1}^{\iota_{1,c}}(x)=f_i(x)&\textrm{if }x\in S_\delta\\
f_{i-1}^{\iota_{1,c}}(x)=f_i(x)&\textrm{if }x\notin S_\delta\\
\end{array}
\right.
\] 
and hence the $(i+1)$th coordinate of $c^{\iota^2}$ equals the $(i+1)$th coordinate of $c$. The proof that $(f_{i}^{\iota_{i,c}})^{\iota_{i,c^\iota}}=f_i$ when $3\leq i\leq m$ and $i$ is odd is very similar to the previous case and is left to the conscious reader.

Finally, we show that $\iota$ is an automorphism of $C$. Let $e:S\to L_\lambda$ such that $e(x)=1$ for every $x\in S$. The element $(1,e,\ldots,e)$ is the identity of $C$ and $(1,e,\ldots,e)^\iota=(1,e,\ldots,e)$. Now, let $c=(g,f_1,\ldots,f_m)$ and $c'=(g',h_1,\ldots,h_m)$ be in $C$. We have

\begin{eqnarray*}
(cc')^\iota&=&(gg',f_1^{g'}h_1,\ldots,f_m^{g'}h_m)^\iota\\
&=&((f_1^{g'}h_1)(1),(f_1^{g'}h_1)^{\iota_{1,cc'}},\ldots,(f_m^{g'}h_m)^{\iota_{m,cc'}})\\
\end{eqnarray*}
and 
\begin{eqnarray*}
c^\iota c'^\iota&=&(f_1(1),f_1^{\iota_{1,c}},\ldots,f_m^{\iota_{m,c}})(h_1(1),h_1^{\iota_{1,c'}},\ldots,h_m^{\iota_{m,c'}})\\
&=&(f_1(1)h_1(1),(f_1^{\iota_{1,c}})^{h_1(1)}h_1^{\iota_{1,c'}},\ldots,(f_m^{\iota_{m,c}})^{h_1(1)}h_m^{\iota_{m,c'}}).\\
\end{eqnarray*}
Recall that from~$(\dag)$, we have $h_1(1)^\pi=g'^\pi$. As $g'^\pi\in S_\delta$ and $f_1$ is constant on $S_\delta$, we obtain $(f_1^{g'}h_1)(1)=f_1((g'^\pi)^{-1})h_1(1)=f_1(1)h_1(1)$ and hence the first coordinate of $(cc')^\iota$ equals the first coordinate of $c^\iota c'^\iota$. We have
\[
(f_1^{g'}h_1)^{\iota_{1,cc'}}(x)=\left\{
\begin{array}{ll}
gg'&\textrm{if }x\in S_\delta\\
(f_2^{g'}h_2)(x)&\textrm{if }x\notin S_\delta\\
\end{array}
\right.\]
and
\[
((f_1^{\iota_{1,c}})^{h_1(1)}h_1^{\iota_{1,c'}})(x)=\left\{
\begin{array}{llll}
f_1^{\iota_{1,c}}(x(h_1(1)^\pi)^{-1})h_1^{\iota_{1,c'}}(x)&=&gg'&\textrm{if }x\in S_\delta\\
f_1^{\iota_{1,c}}(x(h_1(1)^\pi)^{-1})h_2(x)&=&f_2(x(g'^\pi)^{-1})h_2(x)&\\
&=&(f_2^{g'}h_2)(x)&\textrm{if }x\notin S_\delta.\\
\end{array}
\right.
\]
Therefore $(f_1^{g'}h_1)^{\iota_{1,cc'}}=(f_1^{\iota_{1,c}})^{h_1(1)}h_1^{\iota_{1,c'}}$ and hence the second coordinate of $(cc')^\iota$ equals the second coordinate of $c^\iota c'^\iota$. Assume that $2\leq i\leq m-1$ is even.  We have
\[
(f_i^{g'}h_i)^{\iota_{i,cc'}}(x)=\left\{
\begin{array}{ll}
(f_{i+1}^{g'}h_{i+1})(x)&\textrm{if }x\in S_\delta\\
(f_{i-1}^{g'}h_{i-1})(x)&\textrm{if }x\notin S_\delta\\
\end{array}
\right.\]
and $((f_i^{\iota_{i,c}})^{h_1(1)}h_i^{\iota_{i,c'}})(x)$ equals
\[
\left\{
\begin{array}{ll}
f_i^{\iota_{i,c}}(x(h_1(1)^\pi)^{-1})h_{i+1}(x)=f_{i+1}(x(g'^\pi)^{-1})h_{i+1}(x)=(f_{i+1}^{g'}h_{i+1})(x)&\textrm{if }x\in S_\delta\\
f_i^{\iota_{i,c}}(x(h_1(1)^\pi)^{-1})h_{i-1}(x)=f_{i-1}(x(g'^\pi)^{-1})h_{i-1}(x)=(f_{i-1}^{g'}h_{i-1})(x)&\textrm{if }x\notin S_\delta.\\
\end{array}\right.
\]
Therefore $(f_{i}^{g'}h_i)^{\iota_{i,cc'}}=(f_i^{\iota_{i,c}})^{h_1(1)}h_i^{\iota_{i,c'}}$. The proof that $(f_{i}^{g'}h_i)^{\iota_{i,cc'}}=(f_i^{\iota_{i,c}})^{h_1(1)}h_i^{\iota_{i,c'}}$ when $3\leq i\leq m$ and $i$ is odd is very similar to the previous case and is left to the conscious reader.
\end{proof}

Define
\begin{equation}\label{B}
B=C\rtimes \langle\iota\rangle.
\end{equation}

\begin{lemma}\label{silly6}
If $N$ is a subgroup of $C$ with $N\unlhd A$ and $N\unlhd B$, then $N=1$.
\end{lemma}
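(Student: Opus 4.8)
The plan is to exploit the two sources of normality separately. First I would note that, since $M$ is the core of $C$ in $A$ by Lemma~\ref{silly4} and $N$ is a normal subgroup of $A$ contained in $C$, we must have $N\leq M$; and since $N\unlhd C$ and $B=C\rtimes\langle\iota\rangle$, normality of $N$ in $B$ is equivalent to $N^\iota=N$. Thus it suffices to show that the only $\iota$-invariant subgroup of $M$ that is normal in $A$ is trivial. Recall $M\cong K_\lambda^{|\Delta|m}$, so I write a typical element of $N$ as $(1,h_1,\dots,h_m)$ with each $h_i\in\Omega$ taking values in $K_\lambda$, regarded as a function $h_i\colon\Delta\to K_\lambda$ that is constant on the right cosets of $S_\delta$.

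Next I would record how $A$ acts on $M$ by conjugation. A direct computation with~$(\ref{eq:binary})$ shows that conjugating $(1,h_1,\dots,h_m)$ by $(g,f_1,\dots,f_m)\in A$ replaces $h_i$ by the function $x\mapsto f_i(xg^\pi)\,h_i(xg^\pi)\,f_i(xg^\pi)^{-1}$; that is, $A$ shifts the $\Delta$-coordinate by $g^\pi$ and conjugates the value inside $K_\lambda$ (note $K_\lambda\unlhd L_\lambda$). Since conjugation in $K_\lambda$ preserves triviality of a value and, by Lemma~\ref{silly3}, $g^\pi$ runs through all of $S$ as $(g,f_1,\dots,f_m)$ runs through $A$, I obtain the following \emph{shift principle}: if for some single coset $\gamma_0\in\Delta$ the $i$-th component of every element of $N$ vanishes at $\gamma_0$, then, using $N\unlhd A$ and the transitivity of $S$ on $\Delta$, the $i$-th component of every element of $N$ vanishes identically.

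The heart of the argument is an induction on $i$ showing that $h_i\equiv 1$ for every element of $N$ and every $i\in\{1,\dots,m\}$. The seed is the observation that, by the definition of $\iota$ in~$(\ref{iota})$, the $L$-coordinate of $(1,h_1,\dots,h_m)^\iota$ equals $h_1(1)=h_1(\delta)$; as $N^\iota=N\leq M$ forces this coordinate to be trivial, every element of $N$ satisfies $h_1(\delta)=1$, and the shift principle (with $\gamma_0=\delta$) upgrades this to $h_1\equiv 1$. For the inductive step I would feed $n^\iota\in N$ into the inductive hypothesis. Reading off~$(\ref{iota})$, the map $\iota$ acts on the components by two interleaved matchings: at the coset $\delta$ it transposes the pairs $(h_2,h_3),(h_4,h_5),\dots$, while away from $\delta$ it transposes $(h_1,h_2),(h_3,h_4),\dots$ and fixes $h_m$. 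Consequently the component $\tilde h_{i-1}$ of $n^\iota$ exposes exactly $h_i$: when $i$ is odd one finds $\tilde h_{i-1}(\delta)=h_i(\delta)$, and when $i$ is even one finds $\tilde h_{i-1}(\gamma)=h_i(\gamma)$ for every $\gamma\neq\delta$. Since the inductive hypothesis applied to $n^\iota$ gives $\tilde h_{i-1}\equiv 1$, in either case $h_i$ vanishes at some fixed coset (namely $\delta$, or any $\gamma_0\neq\delta$, which exists because $K$ is intransitive and so $|\Delta|\geq 2$), and the shift principle yields $h_i\equiv 1$. This completes the induction, whence $N=1$.

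The main obstacle I anticipate is the bookkeeping in the inductive step: one must track precisely how $\iota$ permutes the $m$ components under a parity-dependent rule that differs at $\delta$ and away from $\delta$, and verify that it is the \emph{previous} component $\tilde h_{i-1}$, already trivialised by the hypothesis, that reveals $h_i$. Getting the two matchings and their one-index offset correct, together with the boundary behaviour at $i=1$ and $i=m$, is where the care is needed; the conceptual crux is the interplay between $\iota$-invariance, which transfers information across the distinguished coset $\delta$, and $A$-conjugation, which then spreads it over all of $\Delta$.
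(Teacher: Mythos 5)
Your proof is correct and follows essentially the same route as the paper's: reduce to $N\leq M$ via Lemma~\ref{silly4}, establish the conjugation/shift principle (the paper's preliminary Claim, proved exactly by your computation with~$(\ref{eq:binary})$ and the surjectivity of $\pi$), and then kill the components one index at a time using the two interleaved transpositions that $\iota$ effects on and off $S_\delta$. The only cosmetic differences are that the paper argues by minimal counterexample rather than by induction, and reads each transposition in the opposite direction (component $j$ of $x^\iota$ equals $f_{j-1}$ at the chosen point, rather than component $i-1$ of $n^\iota$ equals $h_i$); since $\iota$ is an involution with $N^\iota=N$, these are equivalent.
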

\begin{proof}
From Lemma~\ref{silly4}, the group $M$ is the core of $C$ in $A$. Since $N\unlhd A$ and $N\leq C$, we get $N\leq M$ and hence every element of $N$ is of the form $(1,f_1,\ldots,f_m)$ where $f_1,\ldots,f_m\in \Omega$. 

We first prove the following preliminary claim.

\smallskip

\noindent\textsc{Claim. }Let $i\in \{1,\ldots.m\}$ and let $s'\in S$. Assume that, for every element $(1,f_1,\ldots,f_m)$ of $N$, we have $f_i(s')=1$. Then $f_i(s)=1$ for every $s\in S$. 

\smallskip

\noindent 
Let $(1,f_1,\ldots,f_m)\in N$. Fix $g$ an arbitrary element of $L$. From Lemma~\ref{silly3}, there exist $h_1,\ldots,h_m\in \Omega$ with $(g,h_1,\ldots,h_m)\in A$. As  $N\unlhd A$, 
$$
(1,f_1,\ldots,f_m)^{(g,h_1,\ldots,h_m)}=
(1,h_1^{-1}f_1^gh_1,\ldots,h_m^{-1}f_m^{g}h_m)\in N.$$ 
By hypothesis, 
$1=(h_i^{-1}f_i^{g}h_i)(s')=
h_i(s')^{-1}f_i(s'(g^\pi)^{-1})h_i(s')$ and hence 
$f_i(s'(g^\pi)^{-1})=1$. Since $g$ is an arbitrary element of $L$ and $\pi:L\to S$ is surjective, we obtain $f_i=1$ .~$_\blacksquare$

\smallskip

We argue by contradiction and we assume that $N\neq 1$. Let $j$ be the minimal element of $\{1,\ldots,m\}$ such that $N$ contains a non-identity element $(1,f_1,\ldots,f_m)$ with $f_j\neq 1$. Let $x=(1,f_1,\ldots,f_m)$ be an arbitrary element of $N$.  Since $N\unlhd B$ and $\iota\in B$, we have that $$(\dag)\qquad (1,f_1,\ldots,f_m)^\iota=(f_1(1),f_1^{\iota_{1,x}},\ldots,f_m^{\iota_{m,x}})$$ lies in $N$ and hence $f_1(1)=1$. Since $x$ is an arbitrary element of $N$, by applying Claim with $i=1$ and $s'=1$, we get $f_1=1$ for every element $(1,f_1,\ldots,f_m)$ of $N$. This proves that $j>1$. Assume that $j$ is even. Let $s'\in S\setminus S_\delta$. From~$(\ref{iota})$ and from the minimality of $j$, we see that $f_j^{\iota_{j,x}}(s')=f_{j-1}(s')=1$. Since $x^\iota$ is an arbitrary element of $N^\iota=N$, by applying Claim with $i=j$, we get $f_j=1$ for every element $(1,f_1,\ldots,f_m)$ of $N$, contradicting the choice of $j$. Assume that $j$ is odd. 
From~$(\ref{iota})$ and from the minimality of $j$, we see that $f_j^{\iota_{j,x}}(1)=f_{j-1}(1)=1$. Since $x^\iota$ is an arbitrary element of 
$N^\iota=N$, by applying Claim with $i=j$ and $s'=1$,  we have $f_j=1$ for every 
element $(1,f_1,\ldots,f_m)$ of $N$, again contradicting the minimality of $j$. 
This last contradiction concludes the proof.
\end{proof}

\begin{remark}{\rm
The definition of $\iota$ in~$(\ref{iota})$ depends on the fact that we have chosen $m$ odd. Nevertheless, when $m$ is even, it is possible to define (in a similar fashion) an involutory automorphism of $C$ satisfying Lemma~\ref{silly6}.}
\end{remark}

Now we recall the definition of \emph{coset graph}. For a group $G$, a subgroup $A$ and an element $b \in G $, the coset graph $\Cos(G, A, b)$ is the graph with vertex set the set of right cosets $G/A = \{Ag \mid g \in G\}$ and edge set $\{\{Ag, Abg\} \mid g \in G\}$. The following proposition is due to Sabidussi~\cite{sabidussi}.

\begin{proposition}\label{sab}
Let $A$ be a core-free subgroup of $G$ and let $b\in G$ with $G=\langle A,b\rangle$ and $b^{-1}\in AbA$. Then $\Gamma=\Cos(G,A,b)$ is a connected $G$-arc-transitive graph and the action of the stabiliser $G_v$ of the vertex $v$ of $\Gamma$ on $\Gamma(v)$ is permutation isomorphic to the action of $A$ on the right cosets of $A\cap A^b$ in $A$. 
\end{proposition}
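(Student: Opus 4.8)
The plan is to take the base vertex $v$ to be the coset $A$ itself and to let $G$ act on the vertex set $\V\Gamma=\{Ag\mid g\in G\}$ by right multiplication, $Ag\mapsto Agh$. This action is well-defined and transitive on vertices, and since $A$ is core-free its kernel $\bigcap_{g\in G}A^g$ is trivial, so $G\leq\Aut(\Gamma)$; moreover the stabiliser of the base vertex is exactly $G_v=A$, because $Ah=A$ if and only if $h\in A$. I would first record these facts and check that right multiplication sends an edge $\{Ag,Abg\}$ to the edge $\{Agh,Abgh\}$, so that $G$ indeed acts by automorphisms.

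Next I would compute the neighbourhood of $v=A$. A vertex $Ag$ is incident to the edge $\{Ag',Abg'\}$ precisely when $Ag'=Ag$ or $Abg'=Ag$; carrying this out for $Ag=A$ shows that the neighbours of $A$ are the cosets lying in $AbA\cup Ab^{-1}A$. Here the hypothesis $b^{-1}\in AbA$ enters decisively, since it is equivalent to $AbA=Ab^{-1}A$, whence $\Gamma(v)=\{Aba\mid a\in A\}$, a single orbit of $A=G_v$ acting by right multiplication (namely the orbit of $Ab$). Combined with vertex-transitivity, this yields that $G$ is transitive on arcs, i.e.\ $\Gamma$ is $G$-arc-transitive. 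Equivalently, the hypothesis produces an element $h\in Ab\cap b^{-1}A$ carrying the arc $(A,Ab)$ to $(Ab,A)$, which merges the forward and backward arc-orbits.

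For connectedness I would show that the component of $v=A$ consists of all cosets $Ag$ with $g\in\langle A,b\rangle$, which equals $\V\Gamma$ because $G=\langle A,b\rangle$. The inclusion from left to right follows from the neighbour computation, since every neighbour of $Ag$ has the form $Axg$ with $x\in AbA\subseteq\langle A,b\rangle$. For the reverse inclusion I would write an arbitrary $g$ as a word $a_k b^{\varepsilon_k}\cdots a_1 b^{\varepsilon_1}a_0$ in $A$ and $b^{\pm1}$ and induct on the number of occurrences of $b$, using at each step that $Ab^{\pm1}g'$ is a neighbour of $Ag'$; thus every $Ag$ is reachable from $A$.

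The heart of the matter, and the step I expect to demand the most care, is identifying the local action. With $v=A$ we have $G_v=A$ and $\Gamma(v)=\{Aba\mid a\in A\}$, on which $A$ acts by right multiplication. I would exhibit the map $\psi\colon (A\cap A^b)a\mapsto Aba$ from the right cosets of $A\cap A^b$ in $A$ to $\Gamma(v)$. The key verification is that $\psi$ is well-defined and injective: $Aba=Aba'$ holds if and only if $a'a^{-1}\in b^{-1}Ab=A^b$, and since $a'a^{-1}\in A$ automatically, this is exactly $a'a^{-1}\in A\cap A^b$, i.e.\ $(A\cap A^b)a=(A\cap A^b)a'$. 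Surjectivity is immediate, and equivariance is the identity $\psi\bigl((A\cap A^b)a\cdot a'\bigr)=Abaa'=(Aba)a'$, so $\psi$ is an $A$-equivariant bijection, establishing the claimed permutation isomorphism. The main obstacles throughout are the bookkeeping of left versus right cosets and the correct placement of the hypothesis $b^{-1}\in AbA$; once the neighbourhood of $A$ is pinned down as $\{Aba\mid a\in A\}$, both arc-transitivity and the local-action statement fall out cleanly.
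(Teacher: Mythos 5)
Your argument is correct and complete: the identification $G_v=A$ via core-freeness, the computation $\Gamma(v)=\{Aba\mid a\in A\}$ using $b^{-1}\in AbA\Leftrightarrow Ab^{-1}A=AbA$, the word-length induction for connectedness, and the equivariant bijection $(A\cap A^b)a\mapsto Aba$ are all sound. The paper offers no proof of this proposition (it is quoted from Sabidussi), and what you have written is precisely the standard argument one would expect there.
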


The following lemma is part of folklore but we include a proof for sake of completeness.

\begin{lemma}\label{silly7}
Let $A$, $B$ and $C$ be finite groups with $C=A\cap B$ and $|B:C|=2$. Assume that $1$ is the only subgroup of $C$ normal both in $A$ and in $B$. Then there exists a finite group $\bar{G}$ and a connected $\bar{G}$-arc-transitive graph $\Gamma$ such that the stabiliser $\bar{G}_v$ of the vertex $v$ of $\Gamma$ is isomorphic to $A$ and the action of $\bar{G}_v$ on $\Gamma(v)$ is permutation isomorphic to the action of $A$ on the right cosets of $C$ in $A$.
\end{lemma}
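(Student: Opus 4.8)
The plan is to realise the triple $(A,B,C)$ as an \emph{amalgam}, use its universal completion to build an infinite arc-transitive tree via Proposition~\ref{sab}, and then pass to a finite quotient. Concretely, I would first form the amalgamated free product $G=A*_C B$ (which makes sense since $C=A\cap B$ is a common subgroup of $A$ and $B$), fix an element $b\in B\setminus C$, and consider the coset graph $\Cos(G,A,b)$. Since $|B:C|=2$, the subgroup $C$ is normal of index $2$ in $B$ and $b^2\in C$, so $b^{-1}=b^{-2}b\in Cb\subseteq AbA$; moreover $G=\langle A,B\rangle=\langle A,C,b\rangle=\langle A,b\rangle$. Thus the hypotheses of Proposition~\ref{sab} will hold once I check that $A$ is core-free in $G$.

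To verify this and to identify the local action, I would use Bass--Serre theory and the normal form theorem for amalgamated products. The kernel of the action of $G$ on its Bass--Serre tree $T$ is contained in every vertex- and edge-stabiliser, hence in $A\cap B=C$, and it is normal in $G$ and therefore in both $A$ and $B$; by the faithfulness hypothesis this kernel is trivial. The $B$-vertices of $T$ have degree $|B:C|=2$, so $\Cos(G,A,b)$ may be identified with the $|A:C|$-regular tree $\Gamma_\infty$ obtained from $T$ by suppressing its $B$-vertices, its vertex set being $G/A$; an automorphism fixing every $A$-vertex fixes $T$ pointwise, so $A$ is core-free and $\Gamma_\infty=\Cos(G,A,b)$ is a connected $G$-arc-transitive tree. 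A short normal-form argument gives $A\cap A^b=C$ (any $a\in A\setminus C$ would make $bab^{-1}$ a reduced word of syllable length $3$, hence not in $A$), so by Proposition~\ref{sab} the action of $G_v=A$ on $\Gamma_\infty(v)$ is permutation isomorphic to the action of $A$ on the right cosets of $C$ in $A$.

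It remains to extract a finite example. Here I would invoke that $G$ acts on $T$ with finite vertex- and edge-stabilisers and finite quotient, so $G$ is finitely generated and virtually free, and in particular residually finite. Let $F$ be the set of non-trivial $g\in G$ mapping some vertex at distance at most $1$ from $v$ to a vertex at distance at most $1$ from $v$; this set is finite, since the relevant ball is finite and, for fixed vertices $u,u'$, the set $\{g\mid ug=u'\}$ is a coset of the finite stabiliser $G_u$. Note that $A\setminus\{1\}\subseteq F$. Choosing, by residual finiteness, a finite-index $N\unlhd G$ with $N\cap F=\emptyset$, I get $N\cap A=1$, hence $N\cap A^g=(N\cap A)^g=1$ for every $g$ and $N$ is semiregular on $G/A$; moreover the quotient map $\Gamma_\infty\to\Gamma=\Gamma_\infty/N$ is injective on the ball of radius $1$ about $v$. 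Therefore $\Gamma$ is a finite simple connected graph, $\bar G=G/N$ acts arc-transitively on it, $\bar G_{\bar v}=NA/N\cong A/(A\cap N)=A$, and the action of $\bar G_{\bar v}$ on $\Gamma(\bar v)$ coincides with that of $A$ on $\Gamma_\infty(v)$, namely the action of $A$ on the right cosets of $C$ in $A$, as required.

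The main obstacle will be the passage to the finite quotient: one must simultaneously guarantee that $N$ meets $A$ trivially (so that $A$ embeds as the full vertex-stabiliser) and that $N$ creates no short cycles (so that $\Gamma$ stays simple and the local action survives). Both are achieved at once by choosing $N$ to avoid the single finite set $F$, which is exactly what residual finiteness of the virtually free group $G$ supplies. The only other delicate point is the identity $A\cap A^b=C$ together with the triviality of the core of $A$, and it is precisely here that the hypothesis that no non-trivial subgroup of $C$ is normal in both $A$ and $B$ enters.
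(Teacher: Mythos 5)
Your proof is correct and follows essentially the same route as the paper: form the amalgamated free product $A\ast_C B$, use its residual finiteness to find a finite-index normal subgroup $N$ avoiding a suitable finite set, and recover the local action via Sabidussi's coset-graph construction, with the faithfulness hypothesis entering exactly where you say it does (triviality of the core of $A$ and the identity $A\cap A^b=C$). The only immaterial difference is one of presentation: you build the infinite coset graph first and then quotient it by $N$, whereas the paper passes to the finite quotient group $G/N$ first and applies Proposition~\ref{sab} there.
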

\begin{proof}
Let $G=A\ast_CB$ be the free product of $A$ with $B$ amalgamated over $C$. We identify $A$, $B$ and $C$ with their  corresponding isomorphic copies in $G$. Fix $b\in B\setminus C$. Since $G$ is the free product of $A$ and $B$ and since $b$ normalises the subgroup $C$ of $B$, we have $C=A\cap A^b$. It is shown in~\cite[Theorem~$2$]{Baum} that $G$ is a residually finite group.
As $B$ is a finite group and $AA^b=\{xy^b\mid x,y\in A\}$ is a finite set, $G$ contains a normal subgroup of finite index $N$ with $B\cap N=1$ and $AA^b\cap N=1$. Let $\bar{G}=G/N$ and denote by $-:G\to \bar{G}$ the natural projection (in the rest of the proof we use the bar convention, that is, we denote by $\bar{X}$ the image of $X$ under $-$).  

Since $G=\langle A,B\rangle=\langle A,b\rangle$, we have $\bar{G}=\langle \bar{A},\bar{b}\rangle$. Clearly, $\bar{C}=\overline{A\cap A^b}\leq \bar{A}\cap \bar{A}^{\bar{b}}$. Let $\bar{x}\in \bar{A}\cap \bar{A}^{\bar{b}}$. We have $x\in AN\cap A^bN$ and hence $x=a_1n_1=a_2^bn_2$ for some $a_1,a_2\in A$ and $n_1,n_2\in N$. Thus $a_1^{-1}a_2^b=n_1n_2^{-1}\in AA^b\cap N=1$, $a_1=a_2^b$ and $x\in (A\cap A^b)N=CN$. It follows that $\bar{x}\in \bar{C}$. This shows that $\bar{A}\cap \bar{A}^{\bar{b}}=\bar{C}$. Let $\bar{U}=U/N$ be a normal subgroup of $\bar{G}$ contained in $\bar{A}$. We have $\bar{U}\leq \bar{A}\cap \bar{A}^{\bar{b}}=\bar{C}$. Therefore $U$ is a subgroup of $C$ normal both in  $A$ and in $B$. It follows that $U=1$, $\bar{U}=1$ and $\bar{A}$ is a core-free subgroup of $\bar{G}$.  Since $|B:C|=2$, we have $b^{-1}\in Cb\subseteq Ab$ and $\bar{b}^{-1}\in \bar{A}\bar{b}\subseteq \bar{A}\bar{b}\bar{A}$. Since $A\cap N=1$, the restriction of $-$ to $A$ is an isomorphism fr
 om $A$ to $\bar{A}$ mapping $C$ to $\bar{C}$. Therefore the action of $A$ on the right cosets of $A\cap A^b=C$ in $A$ is permutation isomorphic to the action of $\bar{A}$ on the right cosets of $\bar{A}\cap \bar{A}^{\bar{b}}$ in $\bar{A}$. The conclusion then follows from Proposition~\ref{sab} applied to $\bar{G},\bar{A}$ and $\bar{b}$.
\end{proof}

Finally, we are ready to prove the main theorem of this section.\\

\noindent\textit{Proof of Theorem~$\ref{Theo:ResAreSP}$.} Let $L$ be graph-restrictive. Suppose, by contradiction, that $L$ is not semiprimitive and let $K$ be an intransitive normal subgroup of $L$ which is not semiregular. Let $m$ be an arbitrary odd positive integer, let $A$ be as in~$(\ref{A})$, let $C$ be as in~$(\ref{AC})$ and let $B$ be as in~$(\ref{B})$. From Lemma~\ref{silly6}, we see that we may apply Lemma~\ref{silly7} to $A$, $B$ and $C$. Therefore, there exists a $G$-arc-transitive graph $\Gamma$ with the stabiliser $G_v$ of the vertex $v$  isomorphic to $A$ and with the action of $G_v$ on $\Gamma(v)$ permutation isomorphic to the action of $A$ on the right cosets of $C$, which, by Lemma~\ref{silly4}, is equivalent to the action of $L$ on $\Lambda$.  Thus $(\Gamma,G)$ is locally-$L$. Now, using Lemma~\ref{silly4} again, we obtain that the kernel of the action of $G_v$ on $\Gamma(v)$ is isomorphic to $K_\lambda^{|\Delta|m}$.  Since $K$ is not semiregular, we have 
 $|K_\lambda|\neq 1$. Furthermore, as $m$ is an arbitrary odd integer, we have that the size of $G_v^{[1]}$ cannot be bounded above by a function of $L$. Thus $L$ is not graph-restrictive. \hfill\qedsymbol

\section{$p$-graph-restrictive groups}
\label{p-restrictive}
In this section using the following theorem of Glauberman we prove Corollary~\ref{Cor:p-restrictive}.

\begin{theo} \label{Theo:Glaub}\cite[Theorem 1]{Glaub}
Suppose $P$ is a subgroup of a finite group $G$, $g\in G$, and $P\cap P^g$ is a normal subgroup of prime index $p$ in $P^g$. Let $n$ be a positive integer, and let $\tilde{G}=\langle P,P^g,\ldots,P^{g^n}\rangle$. Assume that :
\begin{enumerate}
\item $g$ normalises no non-identity normal subgroup of $P$, and
\item $P\cap Z(\tilde{G})=1$.
\end{enumerate}
Then $|P|=p^t$ for some positive integer $t$ for which $t\leq 3n$ and $t\neq 3n-1$. Moreover, if $n=2$, $p=2$ and $t=6$, then $P$ contains a non-identity normal subgroup of $\tilde{G}$.
\end{theo}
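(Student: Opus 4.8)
The plan is to treat the chain of conjugates $P=P^{g^0},P^{g^1},\ldots,P^{g^n}$ as an amalgam and to read off the order bound from a local, module-theoretic analysis of $\tilde{G}$, in the spirit of the Thompson--Wielandt theorem (of which Theorem~\ref{LocalGardiner} is already a variant). First I would record the elementary geometry. Writing $D=P\cap P^g$, the hypothesis gives $D\unlhd P^g$ with $|P^g:D|=p$, and $|P|=|P^g|$ forces $|P:D|=p$ as well; translating by powers of $g$, consecutive terms satisfy $P^{g^i}\cap P^{g^{i+1}}=D^{g^i}\unlhd P^{g^{i+1}}$, an index-$p$ subgroup of both. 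Crucially, each overlap is normal in the \emph{later} of the two terms but not a priori in the earlier one, and this asymmetry is the root of the difficulty; $\tilde{G}$ is assembled from these $n$ overlapping index-$p$ steps.

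The first substantial task is the qualitative half of the conclusion, that $P$ is a $p$-group; this is the genuine Thompson--Wielandt content, and the place where hypotheses (1) and (2) are indispensable. The natural approach is to track $\OO_{p'}$ along the chain, but the asymmetry noted above means that a single overlap controls the $p'$-structure of only one of its two neighbours, so one cannot argue two terms at a time. Instead I would propagate information across the whole chain $P^{g^0},\ldots,P^{g^n}$, using (1) to annihilate any $g$-invariant subgroup that is normal in $P$ and (2) to dispose of the central contribution, concluding that no nontrivial normal $p'$-section survives and hence that $P$ is a $p$-group.

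Granting that $P$ is a $p$-group, I would derive $t\leq 3n$ by passing to a normal $p$-subgroup of $\tilde{G}$ built from centres: a natural choice is $Z=\Omega_1(Z(P))$ and $V=\langle Z^{\tilde{G}}\rangle$, viewed as a module for $\tilde{G}/C_{\tilde{G}}(V)$. The order of $P$ is then bounded by counting chief factors of this module along the chain, the mechanism being that crossing each of the $n$ index-$p$ overlaps can lengthen the relevant commutator chain by a bounded amount; summing the $n$ contributions yields $|P|=p^t$ with $t\leq 3n$. Hypothesis (2), that $P\cap Z(\tilde{G})=1$, is exactly what keeps a central portion of $V$ from escaping the count and collapsing the estimate.

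The genuinely hard part, and the main obstacle, is the sharpening: the exclusion $t\neq 3n-1$ and the boundary assertion that $n=2$, $p=2$, $t=6$ forces $P$ to contain a non-identity normal subgroup of $\tilde{G}$. These require a parity- and dimension-sensitive refinement of the chief-factor bookkeeping rather than the crude linear estimate: I expect that a configuration realising $t=3n-1$ would force one surplus invariant subgroup, contradicting (1) or (2), while the characteristic-$2$ boundary case $t=3n=6$ is a true exception that must be isolated by pinning down the module $V$ precisely. The $p$-group reduction and the coarse $O(n)$ bound are comparatively routine; extracting the exact constant $3$, the omission of $3n-1$, and the sharp $p=2$ exception is the crux and the most delicate step.
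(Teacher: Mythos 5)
First, a point of order: the paper does not prove this statement at all --- it is quoted, with attribution, as Theorem~1 of Glauberman's paper \cite{Glaub}, so there is no in-paper proof against which your sketch can be matched. Glauberman's own argument is a short, elementary group-theoretic analysis of the chain of conjugates $P,P^g,\ldots,P^{g^n}$ and their intersections; it makes no use of graphs, trees, or the amalgam machinery you invoke. Judged on its own terms, your proposal has a genuine gap at its quantitative core. The object $V=\langle \Omega_1(\Z(P))^{\tilde{G}}\rangle$ that you want to ``view as a module'' need not be a $p$-group: the whole point of the theorem (see the ``moreover'' clause) is that generically $P$ contains \emph{no} non-identity normal subgroup of $\tilde{G}$, so $\OO_p(\tilde{G})$ may well be trivial and there is no normal $p$-subgroup of $\tilde{G}$ on which to count chief factors. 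The Thompson--Wielandt/amalgam method lives on a graph or tree with prescribed finite local action; here $\tilde{G}$ is an abstract finite group generated by $n+1$ conjugates, no such structure is given, and nothing in your sketch explains why ``crossing an overlap'' should cost at most three chief factors. That is to say: the constant $3$, the exclusion $t\neq 3n-1$, and the $p=2$, $t=6$ exception --- which you explicitly defer as ``the crux'' --- are the entire content of the theorem, so the proposal has not engaged with the actual proof.

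Second, you misjudge the easy half and the division of labour between the hypotheses. That $P$ is a $p$-group is not ``the genuine Thompson--Wielandt content'' requiring both hypotheses and a propagation along the whole chain; it follows from hypothesis~(1) alone in two lines. Since $(P\cap P^g)^{g^{-1}}=P\cap P^{g^{-1}}$ is normal of index $p$ in $P$, we have $O^p(P)\leq P\cap P^{g^{-1}}$, hence $O^p(P)^g=O^p(P^g)\leq P\cap P^g\leq P$; as $O^p(P^g)$ is generated by $p'$-elements, all of which lie in $O^p(P)$, we get $O^p(P)^g=O^p(P)$, so $g$ normalises the characteristic (hence normal) subgroup $O^p(P)$ of $P$, and (1) forces $O^p(P)=1$, i.e.\ $P$ is a $p$-group. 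In particular, the ``asymmetry'' you identify as the root of the difficulty evaporates immediately: once $P$ is a $p$-group, the index-$p$ intersection $P\cap P^g$ is maximal, hence normal, in both $P$ and $P^g$. Hypothesis~(2) plays no role in this reduction; it is needed only for the quantitative bound (note $g$ need not normalise $\tilde{G}$, so central subgroups of $\tilde{G}$ inside $P$ are not automatically killed by~(1), and without (2) the order of $P$ cannot be bounded). So the proposal gets the load-bearing structure backwards: the $p$-group reduction you treat as substantial is trivial, while the counting you treat as routine bookkeeping is exactly where the theorem lives, and for it no workable mechanism is offered.
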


\noindent\textit{Proof of Corollary~$\ref{Cor:p-restrictive}$. }
Let $p$ be a prime and let $L$ be a transitive permutation group on $\Omega$. Let $x\in\Omega$ and let $\bar{P}$ be a Sylow $p$-subgroup of $L_x$ such that $|\bar{P}|=p$, and there exists ${\bar{l}}\in L$ such that $\langle \bar{P},\bar{P}^{\bar{l}}\rangle$ is transitive on $\Omega$. We must show that $L$ is $p$-graph-restrictive.

Let $(\Gamma,G)$ be a locally-$L$ pair, let $(u,v)$ be an arc of $\Gamma$ and let $K=G_v^{[1]}$ be the kernel of the action of $G_v$ on the neighbourhood of $v$. Let $P$ be a Sylow $p$-subgroup of $G_{uv}$. By hypothesis, we have $|P:K\cap P|=p$ and, moreover, there exists $l\in G_v$ with $\langle P,P^l\rangle$ transitive on $\Gamma(v)$. 

Let $R$ be a Sylow $p$-subgroup of $KP^l$ containing $Q=K\cap P$. Note that $\langle P,R\rangle$ is transitive on $\Gamma(v)$ and hence $P\neq R$. Let $w=u^l\in\Gamma(v)$ and note that $KP^l\leq (G_{uv})^l=G_{wv}$ and hence $R$ is a Sylow $p$-subgroup of $G_{wv}$. It follows that $Q$ has index $p$ in both $P$ and $R$ and, in particular, is normal in both of them. Since $P\neq R$, we have that $Q=P\cap R$.

Since $\Gamma$ is $G$-arc-transitive, there exists $\sigma\in G$ such that $(u,v)^\sigma=(v,w)$. Since both $P^\sigma$ and $R$ are Sylow $p$-subgroups of $G_{wv}$, there exist $h\in G_{wv}$ such that $R=P^{\sigma h}$. Writing $g=\sigma h$, we get $R=P^g$ and $(u,v)^g=(v,w)$. As $\langle P,P^g\rangle=\langle P,R\rangle$ is transitive on $\Gamma(v)$, it follows that $\langle P^g,P^{g^2}\rangle$ is transitive on $\Gamma(w)$. Hence $\tilde{G}=\langle P,P^g,P^{g^2}\rangle$ is transitive on the edges of $\Gamma$ and $\langle g,P\rangle$ is transitive on the arcs of $\Gamma$. Since $P\leq G_{uv}$, the element $g$ normalises no non-identity normal subgroup of $P$, $P$ contains no non-trivial normal subgroup of $\tilde{G}$ and $P\cap\Z(\tilde{G})=1$. We can then use Theorem~\ref{Theo:Glaub} with $n=2$ to conclude.\hfill\qedsymbol

\section{Restrictiveness of $2$-transitive groups}
\label{TwoTransitive}
In this section we prove Corollary~\ref{TechnicalPLocal2} and we outline the proof of Theorem~\ref{TwoTransitiveTheo}. Let $\Gamma$ be a graph, let $v$ be a vertex of $\Gamma$ and let $G\leq\Aut(\Gamma)$. The subgroup of $G_v$ fixing the neighbourhood $\Gamma(v)$ of the vertex $v$ point-wise will be denoted by $G_v^{[1]}$. As before, we will also write $G_{uv}^{[1]}$ for $G_u^{[1]}\cap G_v^{[1]}$. Finally, $G_{\{u,v\}}$ will denote the set-wise stabiliser of the edge $\{u,v\}$.

\begin{lemma}\label{TechnicalPLocalLemma}
Let $(\Gamma,G)$ be a  locally-transitive pair and let $(u,v)$ be an arc of $\Gamma$. If $G_{uv}^{[1]}$ is a non-trivial $p$-group, then $\OO_p(G_{uv}^{\Gamma(u)})\neq 1$.
\end{lemma}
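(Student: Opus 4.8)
The plan is to argue by contradiction, exploiting the symmetry coming from arc-transitivity. First I would record the following elementary structural facts, valid for \emph{any} arc $(a,b)$ of $\Gamma$: since $G_a^{[1]}$ fixes $b\in\Gamma(a)$ it is contained in $G_{ab}$, and being normal in $G_a$ it is normal in $G_{ab}$; likewise $G_b^{[1]}\unlhd G_{ab}$; by definition $G_{ab}^{[1]}=G_a^{[1]}\cap G_b^{[1]}$; and since $G_a^{[1]}$ is exactly the kernel of the action of $G_{ab}$ on $\Gamma(a)$, we have $G_{ab}^{\Gamma(a)}\cong G_{ab}/G_a^{[1]}$. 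As the pair is locally-transitive and $\Gamma$ is $G$-vertex-transitive, $\Gamma$ is $G$-arc-transitive; hence for every arc $(a,b)$ the group $G_{ab}^{\Gamma(a)}$ is permutation isomorphic to $G_{uv}^{\Gamma(u)}$ and $G_{ab}^{[1]}$ is isomorphic to the non-trivial $p$-group $G_{uv}^{[1]}$. It therefore suffices to derive a contradiction from the assumption that $\OO_p(G_{ab}^{\Gamma(a)})=1$ for every arc $(a,b)$.

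The key local step is to show that this assumption forces $\OO_p(G_b^{[1]})=G_{ab}^{[1]}$ for every arc $(a,b)$. Since $G_b^{[1]}\unlhd G_{ab}$, the characteristic subgroup $\OO_p(G_b^{[1]})$ is normal in $G_{ab}$, so its image in $G_{ab}^{\Gamma(a)}\cong G_{ab}/G_a^{[1]}$ is a normal $p$-subgroup and thus lies in $\OO_p(G_{ab}^{\Gamma(a)})=1$. This gives $\OO_p(G_b^{[1]})\leq G_a^{[1]}$, hence $\OO_p(G_b^{[1]})\leq G_a^{[1]}\cap G_b^{[1]}=G_{ab}^{[1]}$. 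The reverse inclusion is immediate, because $G_{ab}^{[1]}$ is a normal $p$-subgroup of $G_b^{[1]}$, being the intersection of $G_b^{[1]}$ with $G_a^{[1]}\unlhd G_a\geq G_b^{[1]}$, so that $G_{ab}^{[1]}\leq\OO_p(G_b^{[1]})$.

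Finally I would globalise using connectivity. Writing $W_x=\OO_p(G_x^{[1]})$, the local step applied to the two arcs $(a,b)$ and $(b,a)$ yields $W_b=G_{ab}^{[1]}=G_{ba}^{[1]}=W_a$ for every edge $\{a,b\}$; as $\Gamma$ is connected, all the groups $W_x$ coincide in a single subgroup $W$. By construction $W=W_x\leq G_x^{[1]}$ fixes $\Gamma(x)$ pointwise for every vertex $x$, so $W$ fixes every vertex of $\Gamma$ and hence $W=1$, since $G$ acts faithfully on $\V\Gamma$. But $W=W_v=\OO_p(G_v^{[1]})$ contains the non-trivial $p$-group $G_{uv}^{[1]}$, a contradiction. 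I expect the main obstacle to be the local step: one must verify that $\OO_p(G_b^{[1]})$ is genuinely normal in the whole of $G_{ab}$ (not merely in $G_b^{[1]}$), so that it can be pushed into $\OO_p(G_{ab}^{\Gamma(a)})$, and one must correctly identify the kernel of the action on $\Gamma(a)$ as $G_a^{[1]}$; the ensuing connectivity argument, together with the faithfulness of $G$ on vertices, then closes the proof routinely.
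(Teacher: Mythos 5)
Your proof is correct. The core mechanism is the same as the paper's --- the triviality of $\OO_p$ of the local group forces a normal $p$-subgroup into the point-wise kernels, and connectivity then forces that subgroup to be trivial, contradicting $G_{uv}^{[1]}\neq 1$ --- but you organize it differently. The paper argues directly at the single arc $(u,v)$ with the subgroup $\OO_p(G_{uv})$: it observes that $G_{uv}^{[1]}\leq\OO_p(G_{uv})$, so $\OO_p(G_{uv})\neq 1$; that if $\OO_p(G_{uv})$ were contained in the kernel $G_v^{[1]}$ it would equal $\OO_p(G_v^{[1]})$ and hence be normal in $\langle G_v,G_{\{u,v\}}\rangle=G$, which is impossible; and therefore its image in $G_{uv}^{\Gamma(u)}$ is a non-trivial normal $p$-subgroup. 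You instead work with $\OO_p(G_x^{[1]})$, spread the (assumed) failure of the conclusion to every arc via arc-transitivity, pin each $\OO_p(G_b^{[1]})$ down to $G_{ab}^{[1]}$, and propagate edge by edge to get a single subgroup fixing all vertices. What the paper's route buys is brevity and the fact that it needs no contradiction hypothesis at other arcs --- it is a one-arc argument resting on the standard generation fact $G=\langle G_v,G_{\{u,v\}}\rangle$ for connected edge-transitive graphs. What your route buys is that it replaces that generation fact by the more elementary observation that a subgroup contained in every vertex-stabiliser is trivial, at the cost of an explicit connectivity induction; in effect your edge-propagation step is a proof of the special case of the generation fact that is actually needed.
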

\begin{proof}
Since $G_v^{[1]}\unlhd G_{uv}$, we have that $\OO_p(G_v^{[1]})\unlhd G_{uv}$ and hence $\OO_p(G_v^{[1]})\leq\OO_p(G_{uv})$. Suppose that $\OO_p(G_{uv})\leq G_v^{[1]}$.  Then $\OO_p(G_{uv})\leq \OO_p(G_v^{[1]})$, which implies $\OO_p(G_{uv})=\OO_p(G_v^{[1]})$ and therefore $\OO_p(G_{uv})\unlhd G_v$. It follows that $\OO_p(G_{uv})\unlhd\langle G_v,G_{\{u,v\}}\rangle=G$ and hence $\OO_p(G_{uv})=1$, which is a contradiction. We may thus assume that $\OO_p(G_{uv})\nleq G_u^{[1]}$ and, in particular, $1\neq \OO_p(G_{uv})^{\Gamma(u)}\leq \OO_p(G_{uv}^{\Gamma(u)})$.
\end{proof}

\noindent\textit{Proof of Corollary~$\ref{TechnicalPLocal2}$. }We argue by contradiction and we assume that $L$ is not $p$-graph-restrictive. By Theorem~\ref{LocalSpiga}, there exists a locally-$L$ pair $(\Gamma,G)$ and an arc $(u,v)$ of $\Gamma$ such that $G_{uv}^{[1]}$ is a non-trivial $p$-group.  By Lemma~\ref{TechnicalPLocalLemma}, we have $\OO_p(L_v)\cong\OO_p(G_{uv}^{\Gamma(u)})\neq 1$, a contradiction.\hfill\qedsymbol

\medskip

Corollary~\ref{TechnicalPLocal2} was previously known in the case when $L$ is primitive. It is a very strong restriction, especially when $L$ is $2$-transitive. As a corollary of the Classification Theorem for Finite Simple Groups, the  $2$-transitive  groups are known, see for example~\cite[Theorem~$5.3$]{Peter}. In~\cite[Sections~$7.4$,~$7.5$]{Peter2}, the reader can find the complete list of $2$-transitive groups (\cite[Table~$7.2$]{Peter2} describes all possible socles of an almost simple $2$-transitive group and~\cite[Table~$7.3$]{Peter2} describes all possible point-stabilisers of an affine $2$-transitive group). It is then simply a matter of going through these tables and a little work to obtain the next result.

\begin{theo}\label{2TransPLocal}
Let $p$ be a prime and let $L$ be a $2$-transitive permutation group on a finite set $\Omega$ and $x\in\Omega$. If $\OO_p(L_x)\neq 1$, then either $L$ is an affine $2$-transitive group, or the socle $T$ of $L$ is $2$-transitive and isomorphic to one of $\PSL(n,q)$, $\PSU(3,q)$, $\Suz(q)$, or $\Ree(q)$ acting in their natural $2$-transitive action, or $L=\PGammaL(2,8)$ of degree $28$.
\end{theo}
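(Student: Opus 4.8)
The plan is to invoke the classification of finite $2$-transitive groups (a consequence of CFSG), as recorded in the tables cited above, and to inspect the point-stabiliser $L_x$ in every case. Since affine groups already appear unconditionally in the conclusion, I may assume throughout that $L$ is almost simple with socle $T$; the whole argument then reduces to deciding, for each admissible socle $T$ and each of its $2$-transitive actions, whether $\OO_p(L_x)\neq1$ for some prime $p$.

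The analysis splits according to a single structural dichotomy for $L_x$. On the one hand, when $T$ is one of $\PSL(n,q)$, $\PSU(3,q)$, $\Suz(q)$ or $\Ree(q)$ in its natural $2$-transitive action, the stabiliser $L_x$ is a (maximal) parabolic subgroup whose unipotent radical $U$ is a non-trivial normal $p$-subgroup, $p$ being the defining characteristic; as $U$ is normalised by any field or graph automorphisms occurring in $L$, we obtain $1\neq U\leq\OO_p(L_x)$, so these groups lie in the conclusion. On the other hand, for every remaining socle and action I claim that $L_x$ is \emph{almost simple}, with simple socle $T_x$. Granting this, set $N=\OO_p(L_x)$: then $N\cap T_x$ is a normal $p$-subgroup of the non-abelian simple group $T_x$, hence trivial; since $N$ and $T_x$ are both normal in $L_x$ we get $[N,T_x]\leq N\cap T_x=1$, so $N\leq C_{L_x}(T_x)=1$. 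Thus $\OO_p(L_x)=1$ for all $p$ and these groups are correctly excluded. Verifying that the relevant stabilisers are indeed almost simple --- for instance $\Sym(n-1)$ or $\Alt(n-1)$ for alternating socles, orthogonal groups $\mathrm O^{\pm}_{2d}(2)$ for $\mathrm{Sp}(2d,2)$ with $d\geq3$, and the simple point-stabilisers of the Mathieu, Higman--Sims and Conway actions --- is the routine table work the statement alludes to.

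Two kinds of small-degree accidents require care and account for most of the remaining labour. First, the exceptional isomorphisms must be resolved so that each action is placed in exactly one family: $\Suz(2)$ and $\PSU(3,2)$ are solvable and their $2$-transitive actions are of affine type; the alternating action of $\Alt(5)$ on five points coincides with the natural action of $\PSL(2,4)$ (its stabiliser $\Alt(4)$ is not simple, and $\OO_2(\Alt(4))\neq1$, so it rightly belongs to the $\PSL$ family), and the degree-ten actions of $\Alt(6)\cong\PSL(2,9)$ and of $\mathrm{Sp}(4,2)\cong\Sym(6)$ are the natural projective actions of $\PSL(2,9)$; by contrast the genuinely alternating actions of $\Alt(n)$ on $n\geq6$ points have simple stabiliser $\Alt(n-1)$ and are excluded. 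One must also keep in mind that the word \emph{natural} is essential, since $\PSL$-type socles possess non-natural $2$-transitive actions --- such as $\PSL(2,11)$ of degree $11$ (stabiliser $\Alt(5)$) or $\Alt(7)\cong\PSL(4,2)_{\{\text{point}\}}$-type of degree $15$ (stabiliser $\PSL(2,7)$) --- whose stabilisers are simple and which are correctly excluded. Second, and genuinely exceptional, is $\Ree(3)\cong\PGammaL(2,8)$ of degree $28$: here $\Ree(3)$ is not simple, its socle is $\PSL(2,8)$, and $\PSL(2,8)$ is \emph{not} $2$-transitive in this degree-$28$ action (only its natural degree-$9$ action is). Hence this case cannot be subsumed under ``socle equal to $\Ree(q)$ in its natural action'', yet the Borel stabiliser still has order $54$ with $\OO_3\neq1$, which forces the separate entry $L=\PGammaL(2,8)$ in the statement.

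The main obstacle is therefore not a single deep computation but the bookkeeping: one must pass through both classification tables, identify the point-stabiliser (including its behaviour under outer automorphisms) in every $2$-transitive action, and reconcile the exceptional isomorphisms so that each small group is counted once. The only genuinely delicate point is the degenerate Ree case $\Ree(3)$, which must be extracted by hand because the uniform description ``natural parabolic action of a simple group of Lie type'' breaks down precisely there.
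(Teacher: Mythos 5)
Your proposal is correct and follows exactly the route the paper intends: the paper offers no detailed argument beyond citing the classification of finite $2$-transitive groups and remarking that the result is ``simply a matter of going through these tables and a little work'', and your case analysis (parabolic stabilisers with non-trivial unipotent radical for the socles in the conclusion, almost simple stabilisers forcing $\OO_p(L_x)=1$ in the remaining cases, plus the exceptional isomorphisms and the degenerate $\Ree(3)=\PGammaL(2,8)$ entry) is precisely that table work, carried out accurately.
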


By our observation preceding Theorem~\ref{2TransPLocal} and from~\cite[Theorem]{weissp}, to complete the proof of Theorem~\ref{TwoTransitiveTheo}, it suffices to show that $L$ is graph-restrictive when $L$ is an almost simple $2$-transitive group with socle $T$ as in Theorem~\ref{2TransPLocal}. In~\cite{weissu}, Weiss provided a unified proof that $L$ is graph-restrictive in the case where $T$ is isomorphic to $\PSU(3,q)$, $\Suz(q)$, or $\Ree(q)$ (recall that $\PGammaL(2,8)$ acting on $28$ points can be thought of as the non-simple group $\Ree(3)$ in its natural $2$-transitive action). This had been proved earlier by various authors, using more ad hoc methods. The case where $T$ is isomorphic to $\PSL(n,q)$ turned out to be the hardest to deal with, by far. The case where $n=2$ was dealt with by Weiss in \cite{weissBN}. The proof in the general case was first announced in \cite{trofORIG}, but only a brief sketch of the proof was given there. 
 The proof in the case where $n\ge 3$ and $p$ is odd is scattered over several papers and a complete overview of these results is given in the introductory section of \cite{trof1}. Finally, the case of characteristic $2$ was treated in a series of four paper
\cite{trof1,trof2,trof3,trof4}, thus completing the proof of Theorem~\ref{TwoTransitiveTheo}.


\end{document}